\newtheorem{theorem}{Theorem}[section]
\newtheorem{lemma}{Lemma}[section]
\newtheorem{proposition}{Proposition}[section]
\newtheorem{corollary}{Corollary}[section]
\newtheorem{remark}{Remark}[section]
\numberwithin{equation}{section}
      \newcommand{\hu}{\hat u}
      \newcommand{\hv}{\hat v}
      \newcommand{\hmu}{\hat \mu}
      \newcommand{\heps}{\hat \eps}
      \newcommand{\hJ}{\hat J}
   \newcommand{\hE}{\hat E}
\newcommand{\cT}{\mathcal T}
      \newcommand{\thJ}{\hat{\mathrm{J}}}
      \newcommand{\tJ}{\mathrm{J}}
   \newcommand{\hH}{\hat H}
   \newcommand{\ha}{\hat a}
 \newcommand{\cF}{{\mathcal F}}
      \newcommand{\curl}{\operatorname{curl}}
      \newcommand{\dive}{\operatorname{div}}
      \newcommand{\eps}{\varepsilon}
      \newcommand{\mR}{\mathbb{R}}
      \newcommand{\mC}{\mathbb{C}}
      \newcommand{\tE}{\mathrm{E}}  
      \newcommand{\tH}{\mathrm{H}}
      \newcommand{\thE}{\hat{\mathrm{E}}}  
      \newcommand{\thH}{\hat{\mathrm{H}}}
 \newcommand{\hvarphi}{\hat \varphi}
      \newcommand{\supp}{\operatorname{supp}}
\newcommand{\bH}{\mathbf H}
      \def\@setcopyright{}
      \def\serieslogo@{}
\title[Discreteness of Transmission Eigenvalues]{On the Discreteness of  Transmission Eigenvalues for  the Maxwell Equations}
\author{Fioralba Cakoni}
\address[Fioralba Cakoni]{Department of Mathematics, Rutgers University, 
	\newline\indent New Brunswick, NJ 08903, USA.}
\email{fc292@math.rutgers.edu}
\author{Hoai-Minh Nguyen}
\address[Hoai-Minh Nguyen]{Ecole Polytechnique F\'ed\'erale de Lausanne, EPFL,  SB, CAMA, 
\newline\indent Station 8,  CH-1015 Lausanne, Switzerland.}
\email{hoai-minh.nguyen@epfl.ch}
\begin{document}

\maketitle

\begin{abstract}
In this paper, we establish the discreteness of transmission eigenvalues for Maxwell's equations. More precisely, we show that the spectrum of the  transmission eigenvalue problem is discrete, if the electromagnetic parameters $\eps, \, \mu, \, \heps, \,  \hmu$ in the equations characterizing the inhomogeneity and  background, are  smooth in some neighborhood of  the boundary,  isotropic on the boundary,   and satisfy the conditions $\eps \neq \heps$,  $\mu \neq \hmu$, and $\eps/ \mu \neq \heps/  \hmu$  on the boundary. These are quite general assumptions on the coefficients which are easy to check. To our knowledge, our paper is the first to establish discreteness  of transmission eigenvalues for Maxwell's equations without assuming any restrictions on the sign combination of the contrasts  $\eps-\heps$ and  $\mu - \hmu$ near the boundary,  and allowing for all the electromagnetic parameters to be inhomogeneous and anisotropic, except for on the boundary where they are  isotropic but not necessarily constant as it is often assumed in the literature. 
\end{abstract}

\noindent{\bf MSC}: 35A01, 35A15, 78A25, 78A46.

\noindent {\bf Keywords}:  transmission eigenvalues, inverse scattering, spectral theory, Cauchy's problem, complementing conditions.

\section{Introduction}

The  transmission eigenvalue problem is at the heart of  inverse scattering theory for inhomogeneous media.  This eigenvalue  problem is a late arrival  in scattering theory with its first appearance in \cite{old}, \cite{k22},  in connection with injectivity of the relative scattering operator. Transmission eigenvalues are  related to interrogating frequencies for which there is an incident field that doesn't scatterer by the medium.  The  transmission eigenvalue problem has a  deceptively simple formulation,  namely two elliptic PDEs in a bounded domain (one governs the wave propagation in the scattering medium  and the other in the background that occupies the support of the medium) that share the same Cauchy data on the boundary, but  presents a  perplexing mathematical structure. In particular, it is a non-selfadjoint eigenvalue problem for a non-strongly elliptic operator, hence the  investigation of its spectral properties becomes challenging.  Roughly, the spectral properties depend on the assumptions on the contrasts in the media (i.e. the difference of the respective coefficients in each of the equations) near the boundary.  Questions central to the inverse scattering theory include:  {\it discreteness} of the spectrum that is closely related to the determination of the support of  inhomogeneity from scattering data using linear sampling and factorization methods \cite{CCH16},  {\it location} of transmission eigenvalues in the complex plane that is essential to the development of the time domain linear sampling method \cite{CMV20}, and the {\it existence} of transmission eigenvalues as well as the {\it accurate determination} of real  transmission eigenvalues from scattering data, which has became important since real transmission eigenvalues could be used to obtain information about the material properties of the scattering media.  We refer the reader to \cite{CCH16} for a recent and self-contained introduction to the topic.

 This paper concerns the discreteness  and location of transmission eigenvalues in the scattering of  time-harmonic electromagnetic waves by an  inhomogeneous (possibly anisotropic) medium of bounded support.  Let us  introduce the  mathematical formulation of  the  electromagnetic transmission eigenvalue problem. To this end,  let $\Omega$ be an open,  bounded  subset of $\mR^3$ representing the support of the inhomogeneity,  which we assume to be of class $C^2$. Let $\eps,\,  \mu, \, \heps, \,  \hmu$ be ($3 \times 3$) symmetric, uniformly elliptic,   matrix-valued  functions  defined in $\Omega$ with $L^\infty(\Omega)$ entries.  A complex number $\omega$ is called an eigenvalue of the  transmission eigenvalue problem, or a {\it transmission eigenvalue}, associated with $\eps, \,  \mu, \,\heps, \, \hmu$  in $\Omega$ if there exists a non-zero solution $(E, H, \hE, \hH) \in [L^2(\Omega)]^{12}$ of the following system  
\begin{equation}\label{sys-ITE}
\left\{\begin{array}{c}
\nabla \times E =  i \omega \mu H  \\[6pt]
\nabla \times H = - i \omega \eps E
\end{array}\right.  \, \mbox{ in } \Omega,\qquad  \quad \left\{\begin{array}{c}
\nabla \times \hE = i \omega  \hmu \hH \\[6pt]
\nabla \times \hH = - i \omega  \heps \hE 
\end{array}\right.  \, \mbox{ in } \Omega,
\end{equation}
\begin{equation}\label{bdry-ITE}
(\hE - E) \times \nu = 0\,  \mbox{ on } \partial \Omega, \quad \mbox{ and } \quad (\hH - H) \times \nu = 0 \, \mbox{ on } \partial \Omega,   
\end{equation}
where $\nu$ denotes the outward unit normal vector to $\partial \Omega$.

\medskip 

The main result  that we prove in this paper is stated in \Cref{thm-main} below. For the reader convenience we  first must  clarify some terminology used in the formulation of  this theorem. A $3 \times 3$ matrix-valued function $M$ defined in a  subset $O \subset \mR^3$ is called isotropic at $x \in O$ if it is proportional  to the identity matrix at $x$, i.e., $M(x) = m I$ for some scalar $m = m(x)$ where $I$ denotes the $3 \times 3$ identity matrix. In this case, for the notational ease, we also denote $m(x)$ by $M(x)$. If $M$ is isotropic for $x \in O$, then $M$ is said to be isotropic in $O$. Condition~\eqref{thm-main-cond} below is understood under the convention $m(x) = M(x)$.

\begin{theorem}\label{thm-main} Assume that 
\begin{enumerate}
\item[i)] $\eps, \,  \mu, \, \heps, \,  \hmu$  are of class $C^1$ in some neighborhood of $\partial \Omega$, 
\item[ii)]  $\eps, \,  \mu, \, \heps, \,  \hmu$ are isotropic on $\partial \Omega$, 
\item[iii)]
\begin{equation}\label{thm-main-cond}
\eps \neq \heps, \quad \mu \neq \hmu, \quad \eps/ \mu \neq \heps/  \hmu \quad \mbox{ on } \partial \Omega. 
\end{equation}
\end{enumerate}
The set of the  transmission  eigenvalues  of  \eqref{sys-ITE} and \eqref{bdry-ITE} is discrete with $\infty$ as the only  possible accumulation point. 
\end{theorem}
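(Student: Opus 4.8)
The plan is to recast the eigenvalue problem as a spectral problem for an analytic family of Fredholm operators and then invoke the analytic Fredholm theorem. Eliminating $H$ and $\hH$ from \eqref{sys-ITE} via $H=(i\omega)^{-1}\mu^{-1}\curl E$ and $\hH=(i\omega)^{-1}\hmu^{-1}\curl\hE$, the problem becomes: find $\omega\neq 0$ and a nontrivial $(E,\hE)$ with $\curl(\mu^{-1}\curl E)=\omega^2\eps E$ and $\curl(\hmu^{-1}\curl\hE)=\omega^2\heps\hE$ in $\Omega$, together with the Cauchy-type transmission conditions $(\hE-E)\times\nu=0$ and $(\hmu^{-1}\curl\hE-\mu^{-1}\curl E)\times\nu=0$ on $\partial\Omega$. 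I would encode this as $\mathbb{A}_\omega U=0$ with $U=(E,\hE)$ in a Hilbert space $X$ built from $H(\curl)$ and $H(\dive)$ regularity (retaining the divergence constraints $\dive(\eps E)=\dive(\heps\hE)=0$ implicit in the first-order system), where $\mathbb{A}_\omega\colon X\to Y$ is bounded and depends polynomially — hence analytically — on $\omega\in\C$. Discreteness of the set of transmission eigenvalues with $\infty$ as the only possible accumulation point then follows from the analytic Fredholm theorem, provided one shows (a) $\mathbb{A}_\omega$ is Fredholm of index zero for every $\omega$, and (b) $\mathbb{A}_{\omega_0}$ is invertible for at least one $\omega_0$.

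For (a), the difficulty has two sources: the curl-curl operator is not strongly elliptic (its principal symbol annihilates gradients), and the transmission conditions couple the two equations. The first source is local and handled in the standard way — a Helmholtz/gauge decomposition, or adding a penalization term $-\nabla\dive$ — so that away from $\partial\Omega$ the operator differs from an elliptic one by a compact term. The essential point is therefore an a priori estimate near $\partial\Omega$: using a partition of unity subordinate to a cover of a neighborhood of the boundary, I would freeze the (isotropic, $C^1$) coefficients at a boundary point $x_0$, flatten $\partial\Omega$, and reduce to a constant-coefficient transmission problem for two Maxwell operators in the half-space $\R^3_+$, the task being to prove that the only solution decaying at infinity with vanishing Cauchy data on $\{x_3=0\}$ is zero — i.e.\ that the coupled system satisfies the Shapiro--Lopatinskii (complementing) condition. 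Lower-order terms and coefficient variations are then absorbed, yielding $\|U\|_X\le C(\|\mathbb{A}_\omega U\|_Y+\|U\|_{X'})$ with $X$ compactly embedded in $X'$; Fredholmness follows, with index zero obtained e.g.\ from the analogous estimate for the adjoint problem or by homotopy to a symmetric model.

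Verifying this complementing condition is the main obstacle. After a tangential Fourier transform with covariable $\xi\in\R^2$, each half-space Maxwell operator reduces to a system of ODEs in $x_3$ whose space of solutions decaying as $x_3\to+\infty$ is finite-dimensional; the transmission conditions then become a linear system whose coefficient matrix must be invertible for every $\xi\neq 0$. I expect the decaying modes to split (using the divergence constraints) into two ``transverse'' families governed by the contrasts in $\mu$ and in $\eps$, together with one ``longitudinal''/gradient family governed by the ratio $\eps/\mu$ versus $\heps/\hmu$, so that the relevant determinant factors and its non-vanishing for all $\xi\neq0$ is equivalent precisely to $\eps(x_0)\neq\heps(x_0)$, $\mu(x_0)\neq\hmu(x_0)$, and $\eps(x_0)/\mu(x_0)\neq\heps(x_0)/\hmu(x_0)$ — exactly hypothesis~(iii), with isotropy~(ii) making the symbol computation tractable and $C^1$ regularity~(i) justifying the freezing of coefficients. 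Carrying out this symbol analysis, including the bookkeeping of which norms control which components, is where the real work lies. Finally, for (b) I would exhibit one non-eigenvalue: for $\omega_0$ purely imaginary with sufficiently large modulus, a standard energy (integration by parts) argument using the transmission conditions forces $(E,\hE)=0$, so $\mathbb{A}_{\omega_0}$ is injective, hence — being Fredholm of index zero — invertible. Combining (a) and (b), the analytic Fredholm theorem gives that the set of transmission eigenvalues is discrete with no finite accumulation point, which is the assertion of \Cref{thm-main}.
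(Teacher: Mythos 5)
Your overall architecture --- recast the problem as a family of operators, prove Fredholmness from an a priori estimate near $\partial\Omega$ obtained by freezing the coefficients, flattening the boundary, and verifying a complementing (Shapiro--Lopatinskii) condition via a tangential Fourier transform, then exhibit one invertible member of the family --- is essentially the paper's, and your prediction that the half-space determinant factors into pieces whose non-vanishing is exactly condition~(iii) is confirmed by the denominators $(\alpha^2\heps^2-\eps^2)|\xi|^2+\alpha^2k^2\eps\heps\mu\hmu(\heps/\hmu-\eps/\mu)$ and $(\mu^2-\alpha^2\hmu^2)|\xi|^2+\alpha^2k^2\eps\heps\mu\hmu(\mu/\eps-\hmu/\heps)$ appearing in \Cref{pro-HS}. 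The genuine gap is your step (b). In the transmission problem the two fields satisfy equations with the \emph{same} sign of $\omega^2$ and share Cauchy data, so integration by parts produces a \emph{difference} of energies rather than a sum: multiplying the two second-order equations by the conjugates of $E$ and $\hE$, subtracting, and using \eqref{bdry-ITE} to cancel the boundary terms leaves
\begin{equation*}
\int_\Omega \big(\langle\mu^{-1}\curl E,\curl E\rangle-\omega^2\langle\eps E,E\rangle\big) \;=\; \int_\Omega \big(\langle\hmu^{-1}\curl\hE,\curl\hE\rangle-\omega^2\langle\heps\hE,\hE\rangle\big),
\end{equation*}
which forces nothing for any $\omega_0$, purely imaginary or otherwise. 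This non-coercivity is precisely what makes the transmission eigenvalue problem hard, and it is why the paper cannot take your shortcut: uniqueness at the reference point is instead extracted from the quantitative boundary estimate of \Cref{pro-main-1} (which controls $|k|\,\|(E,H,\hE,\hH)\|_{L^2(\Omega)}$ by the data) combined with the interior exponential decay of \Cref{lem-decay} for $|\Im(k^2)|\ge\gamma|k|^2$ and $|k|$ large, while existence requires a separate limiting absorption argument built on an auxiliary system in which the hatted equations carry a flipped sign of $k$ (the parameter $\alpha$), for which Lax--Milgram does apply, followed by a Fredholm perturbation back to the original signs.

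A second, smaller issue: for Maxwell's equations, compactness of the resolvent does not come for free, since $H(\curl,\Omega)$ does not embed compactly into $[L^2(\Omega)]^3$. The paper must choose the data space $\bH(\Omega)$ in \eqref{def-Hspace} so that the output fields are divergence-free with respect to the coefficients and have matching normal traces; it is the combination of these constraints with the $H^1$ control near the boundary and the $O(1/|k|)$ interior bound that yields compactness of the solution operator $T$. Your remark about ``retaining the divergence constraints'' points in the right direction, but this, together with the reference-point invertibility above, is exactly where the Maxwell case departs from the scalar one and where your outline needs substantive additional work.
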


The analysis used in the proof of \Cref{thm-main} also allows us to obtain the following result on the  transmission eigenvalue free region of the complex plane ${\mathbb C}$.

\begin{proposition}\label{pro} Assume that $\eps, \,  \mu, \, \heps, \,  \hmu$  are of class $C^1$ in some neighborhood of $\partial \Omega$,  isotropic  on $\partial \Omega$,  and 
\begin{equation}\label{thm-main-cond2}
\eps \neq \heps, \quad \mu \neq \hmu, \quad \eps/ \mu \neq \heps/  \hmu \quad \mbox{ on } \partial \Omega. 
\end{equation}
For $\gamma >0$, there exists $\omega_0 > 0$ such that if $\omega \in \mC$ with $|\Im(\omega^2)| \ge \gamma |\omega|^2$ and $|\omega| \ge \omega_0$, then $\omega$ is not a transmission eigenvalue. 
\end{proposition}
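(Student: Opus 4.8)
The plan is to reformulate the transmission eigenvalue problem \eqref{sys-ITE}--\eqref{bdry-ITE} as an operator equation to which the analytic Fredholm theory applies, so that discreteness of the spectrum follows once invertibility is established for a single (or even just one) value of $\omega$ outside a null set. The difficulty, as emphasized in the introduction, is that the underlying operator is not strongly elliptic, so one cannot invoke standard elliptic estimates directly. Instead, the strategy I would pursue is to eliminate two of the four fields, say $H$ and $\hH$, obtaining a system of two second-order equations in $(E,\hE)$ of the form $\nabla\times(\mu^{-1}\nabla\times E) - \omega^2\eps E = 0$ and $\nabla\times(\hmu^{-1}\nabla\times\hE) - \omega^2\heps\hE = 0$ in $\Omega$, coupled through the Cauchy boundary conditions $(E-\hE)\times\nu = 0$ and $(\mu^{-1}\nabla\times E - \hmu^{-1}\nabla\times\hE)\times\nu = 0$ on $\partial\Omega$. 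Setting $u = \hE - E$ after an appropriate extension/lifting, one is led to a fourth-order-type problem for $u$; the key is to show that the associated boundary value problem satisfies the \emph{complementing condition} (the Agmon--Douglis--Nirenberg / Shapiro--Lopatinski condition) precisely under hypotheses (ii) and (iii) of \Cref{thm-main}.

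The central step, which I expect to be the main obstacle, is the verification of the complementing condition on $\partial\Omega$. Because $\eps,\mu,\heps,\hmu$ are isotropic on the boundary, at each boundary point one can freeze coefficients and, after rotating so that $\nu = e_3$, reduce to a constant-coefficient half-space problem. One then takes a tangential Fourier transform with covariable $\xi'\in\mR^2\setminus\{0\}$ and studies the decaying solutions of the resulting system of ODEs in the normal variable. The complementing condition amounts to the statement that the only such decaying solution compatible with zero boundary data is the trivial one; tracking the algebra, this reduces to the non-vanishing of certain determinants built from $\eps,\heps$ (via the contrast $\eps-\heps$), from $\mu,\hmu$ (via $\mu-\hmu$), and from the combination $\eps/\mu - \heps/\hmu$ — which is exactly why the three inequalities in \eqref{thm-main-cond} appear. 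This computation is delicate because of the curl-curl structure (nontrivial kernel, gauge/divergence issues), so I would organize it by decomposing fields into a part tangential to $\xi'$ and a part perpendicular, isolating the ``$\mathrm{TE}$/$\mathrm{TM}$''-like scalar reductions in which the roles of $\eps\neq\heps$, $\mu\neq\hmu$, and $\eps/\mu\neq\heps/\hmu$ become transparent.

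Granting the complementing condition, I would then establish an a priori estimate of the form $\|(E,\hE)\|_{H(\curl,\Omega)} \le C\big(\|\,\cdot\,\|_{L^2} + \text{lower order}\big)$ on a suitable quotient/subspace (modulo gradients, which form an infinite-dimensional kernel and must be handled by working in the space of fields with controlled divergence, e.g. imposing $\dive(\eps E) = 0$, $\dive(\heps\hE) = 0$, which is consistent with \eqref{sys-ITE}). Combining this estimate near the boundary (from the complementing condition) with interior elliptic regularity in $\Omega$, one upgrades $L^2$ solutions to $H^1_{\loc}$ and obtains compactness of the relevant resolvent-type operator. Concretely, I would define a bounded operator $\mathbb{T}(\omega)$ on a Hilbert space of fields such that transmission eigenvalues are exactly the $\omega$ for which $\mathbb{T}(\omega)$ is not injective, show $\mathbb{T}(\omega)$ is Fredholm of index zero and depends analytically on $\omega$ (polynomially in $\omega^2$, in fact), and then exhibit one $\omega$ (for instance a $\omega$ with large imaginary part, using the coercivity that a suitable Gårding-type inequality provides for such $\omega$) at which $\mathbb{T}(\omega)$ is invertible. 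Analytic Fredholm theory then yields that $\mathbb{T}(\omega)$ is invertible for all $\omega$ outside a discrete set with $\infty$ as the only possible accumulation point, which is the assertion of \Cref{thm-main}.

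For \Cref{pro}, the same machinery gives more: when $|\Im(\omega^2)|\ge\gamma|\omega|^2$, the quadratic form associated with $\mathbb{T}(\omega)$ acquires a definite imaginary part of size $\gtrsim|\omega|^2\|\cdot\|_{L^2}^2$, and this — together with the boundary coercivity from the complementing condition, which controls the $H(\curl)$-norm up to the $L^2$-norm — yields an explicit lower bound $\|\mathbb{T}(\omega)(E,\hE)\| \ge c(\gamma)\,\omega_0\,\|(E,\hE)\|$ once $|\omega|\ge\omega_0$ is large enough to absorb lower-order terms. Hence $\mathbb{T}(\omega)$ is injective there, so no such $\omega$ is a transmission eigenvalue. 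The only new point relative to \Cref{thm-main} is making the dependence of constants on $\omega$ quantitative, which is routine once the a priori estimate is in hand; I do not expect any genuinely new obstacle here beyond bookkeeping the powers of $|\omega|$.
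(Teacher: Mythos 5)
Your overall strategy for \Cref{pro} --- reduce to an injectivity statement for $\omega$ in the sector $|\Im(\omega^2)|\ge\gamma|\omega|^2$, $|\omega|$ large, and obtain that injectivity from an a priori estimate whose boundary part rests on a complementing-type condition verified by freezing coefficients, rotating $\nu$ to $e_3$, and taking a tangential Fourier transform --- is exactly the route the paper takes (the three contrasts in \eqref{thm-main-cond2} appear there as the non-vanishing of the denominators in \eqref{pro-HS-em} and \eqref{pro-HS-em-2}, and \Cref{pro} is deduced from the uniqueness statement in \Cref{pro-main}, i.e.\ from the estimate \eqref{pro-main-1-est} with zero data). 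However, the step you use to close the argument for \Cref{pro} contains a genuine gap. You claim that for $|\Im(\omega^2)|\ge\gamma|\omega|^2$ the quadratic form associated with your operator $\mathbb{T}(\omega)$ ``acquires a definite imaginary part of size $\gtrsim|\omega|^2\|\cdot\|_{L^2}^2$.'' It does not. The natural sesquilinear form for the transmission problem is the \emph{difference} of the two energy forms (the minus sign is forced if the boundary terms are to cancel against the transmission conditions on $\partial\Omega$), so its imaginary part is proportional to $\Im(\omega^2)\bigl(\int_\Omega\langle\eps E,E\rangle-\int_\Omega\langle\heps\hE,\hE\rangle\bigr)$, which has no sign. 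This loss of sign-definiteness is precisely what makes the problem non-coercive and is why the complementing condition is needed at all; if the coupled form were coercive in the sector, the eigenvalue-free region would follow with no boundary hypotheses. Note that even the paper's regularized form \eqref{Step2-bilinear} is only $\delta$-coercive (with a constant degenerating as $\delta\to0$), not $|k|^2$-coercive, and it is obtained only after flipping the sign of $k$ in the hatted system (cf.\ \eqref{sys-ITE-d}); it is used solely to produce an approximate solution, never to prove the quantitative bound.

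The correct mechanism, which your sketch is missing, splits the $|\omega|$-weighted $L^2$ control into two separate pieces. Near the boundary, the half-space Fourier analysis itself delivers the strong estimate $|k|\,\|(E,H,\hE,\hH)\|_{L^2}\lesssim\cdots$ (see \eqref{pro-HS-est} in \Cref{pro-HS} and its localized version \eqref{pro-main-e1}); this is where \eqref{thm-main-cond2} and the sector condition on $k^2$ enter, through lower bounds on the symbol denominators, uniformly in $k$. Away from the boundary, where the coefficients are merely $L^\infty$ and the two systems are \emph{uncoupled}, one applies the imaginary-part argument to each Maxwell system \emph{separately} --- there the sign problem disappears --- in the form of the exponential decay estimate of \Cref{lem-decay}, which shows that the fields in $\Omega\setminus\Omega_{\tau_0}$ are $O(e^{-c|k|})$ relative to their size near the boundary and can therefore be absorbed. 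Only the combination of these two ingredients yields \eqref{pro-main-1-est} and hence \Cref{pro}. A secondary caveat: your proposed reduction to a fourth-order problem for $u=\hE-E$ does not go through when $\mu\neq\hmu$ (the two second-order curl-curl equations cannot be subtracted into a single equation for the difference), so you should keep the pair $(E,\hE)$, or better the first-order quadruple $(E,H,\hE,\hH)$, as the unknown throughout.
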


Here and and in what follows, for $z \in \mC$, let  $\Im(z)$ denote the imaginary part of $z$.

\begin{remark}
\em{Since $\gamma>0$  can be chosen arbitrary small, the result of Proposition \ref{pro} together the fact that $\infty$ is  the only accumulation point of the transmission eigenvalues proven in \Cref{thm-main}, imply that all the transmission eigenvalues $\omega$, but finitely many, lie in a wedge of arbitrary small angle.}
\end{remark}

The structure of spectrum of the transmission eigenvalue problem is better understood in the case of scalar inhomogeneous Helmoltz equations. In this case, the  transmission eigenvalue problem can be stated as follows. Let $d \ge 2$ and $\Omega$ be an open,  bounded  Lipschitz subset of $\mR^d$ . Let $A_1, A_2$ be two ($d \times d$) symmetric, uniformly elliptic,   matrix-valued bounded function defined in $\Omega$ and $\Sigma_1$ and $\Sigma_2$ be two bounded positive functions defined in $\Omega$.  A complex number $\omega$ is called an eigenvalue of the  transmission eigenvalue problem, or a transmission eigenvalue, if there exists a non-zero solution $(u_1, u_2)$ of the system 
 \begin{equation}\label{pro1a}  
 \left\{\begin{array}{lll}
 \dive(A_1 \nabla u_1) + \omega^2 \Sigma_1 u_1= 0 ~~&\text{ in}~\Omega, \\[6pt]
 \dive(A_2 \nabla u_2) + \omega^2 \Sigma_2 u_2= 0 ~~&\text{ in}~\Omega, 
 \end{array} \right. 
 \end{equation}
 \begin{equation}\label{pro1b}
  u_1 =u_2, \quad  A_1 \nabla u_1\cdot \nu = A_2 \nabla u_2\cdot \nu ~\text{ on } \partial \Omega.  
 \end{equation}
The discreteness of transmission eigenvalues for  the Helmholtz equation has been  investigated extensively in the literature. The first  discreteness result appeared in \cite{first}, whereas  \cite{MinhHung17} proves the state-of-the-art results  on the discreteness of transmission eigenvalues for anisotropic background and inhomogeneity under most general assumptions on the coefficients using Fourier and multiplier approaches. More specifically, it is shown in \cite{MinhHung17} that the transmission eigenvalue problem has a discrete spectrum if the coefficients are smooth  only near the boundary, and 
\begin{enumerate}
\item[i)] $A_1(x),  \, A_2(x)$ satisfy the complementing boundary condition with respect to  $\nu(x)$ for all $x \in \partial \Omega$, i.e., 
for all $x \in \partial \Omega$ and for all $\xi \in \mR^d \setminus \{0\}$ with  $\xi \cdot \nu = 0$, we have 
\begin{equation*}
\langle A_2 \nu, \nu \rangle \langle A_2 \xi, \xi \rangle  - \langle A_2 \nu,  \xi \rangle^2 \neq  \langle A_1 \nu, \nu \rangle \langle A_1 \xi, \xi \rangle  - \langle A_1 \nu, \xi \rangle^2, 
\end{equation*}
\item[ii)]  $\big\langle  A_1(x) \nu(x), \nu(x) \big\rangle \Sigma_1(x) \neq \big\langle  A_2(x) \nu(x), \nu(x) \big\rangle \Sigma_2(x)$ for all $x \in \partial \Omega$. 
\end{enumerate}
Additional results in \cite{MinhHung17}, also include various combinations of the sign of contrasts $A_1-A_2$ and $\Sigma_1-\Sigma_2$ on the boundary. Previous results on discreteness can be found in  \cite{BCH11,  LV12, Sylvester12} and references therein. We must emphasize that the conditions i) and ii) are more general than 
simply one sign contrasts $A_2 - A_1$ and/or $\Sigma_2 - \Sigma_1$ near the boundary. To complete the picture on the  transmission eigenvalue problem in the scalar case, we remark that the first answer to the existence of transmission eigenvalues for one sign contrast in $\Omega$ was given in \cite{PS06}  where the authors showed the existence of a few real transmission eigenvalues for the index of refraction sufficiently large,  followed by \cite{exist}, \cite{kirsch}, which prove the existence of infinite real transmission eigenvalues removing the size restriction on the index. Completeness of  transmission eigenfunctions and first estimates on the  counting function are shown in \cite{Robiano13}, \cite{Robbiano16}  for $C^\infty$  boundary and coefficients since they use semiclassical analysis and pseudo-differential calculus. Again in $C^\infty$ setting, \cite{Vodev15}, \cite{Vodev18} prove the sharpest known results in the scalar case on eigenvalue free zones and Weyl's law for the scalar  case  improving an earlier result by \cite{lassi}. 

The story of the  transmission eigenvalue problem for Maxwell's equations is not as complete as  for the scalar case discussed above. Of the first results on discreteness is given by Haddar in \cite{Haddar04}  where it considers the case of $\mu = \heps = \hmu = I $, and $\eps - I $ invertible in $\Omega$.  Chesnel in \cite{Chesnel12} employs the so-called $T$-coercivity  to prove  discreteness when $\heps = \hmu = I $,  and $\eps - I $ and $\mu^{-1} - I $ are both greater than $cI$ or both less $-c I$ in $\Omega$ for some positive constant $c$ in a neighborhood of $\partial \Omega$. Cakoni et al. in  \cite{CHM15} use  an  integral equation approach to study discreteness for  the case when $\mu = \heps = \hmu = I $ and the matrix valued $\eps$  becomes constant not equal to $1$ near the boundary.  \Cref{thm-main}  therefore adds to this list  quite general conditions on the coefficients for which the discreteness holds. To our knowledge, our paper is the first to establish discreteness  of transmission eigenvalues for Maxwell's equations without assuming any restrictions on the sign combination of the contrasts  $\eps-\heps$ and  $\mu - \hmu$ near the boundary (as long as they do not change sign up the boundary),  and allowing for all the electromagnetic parameters to be inhomogeneous and anisotropic, except for on the boundary where they are  isotropic but not necessarily constant as it is often assumed in the literature.  For the case of electromagnetic  transmission eigenvalue problems, other type of results are rather limited, and we refer the reader to  \cite{exist}  for the existence of real transmission eigenvalues and \cite{HM18} for the completeness of eigenfunctions for the setting related to the one in \cite{CHM15} mentioned above. 

The analysis in this paper is inspired  by the concept of complementary conditions suggested by  Agmon, Douglis, and Nirenberg in their celebrated papers \cite{ADNI, ADNII} for elliptic systems. For  Maxwell's equations,  the complementary condition for the Cauchy problems has been recently investigated in \cite{NgSil} for general anisotropic coefficients in the context of negative index metamaterials. To be able to apply the theory of complementing conditions to the Maxwell equations, various forms of the Poincar\'e lemma and Helmholtz decomposition are used with a suitable implementation of local charts.  The analysis in this paper  is in the spirit of the one developed in \cite{MinhHung17}. The idea is to show that the following system 
\begin{equation}\label{sys-ITE-E}
\left\{\begin{array}{c}
\nabla \times E =  i \omega \mu H  + J_e\mbox{ in } \Omega, \\[6pt]
\nabla \times H = - i \omega \eps E + J_m  \mbox{ in } \Omega, 
\end{array}\right. \quad \left\{\begin{array}{c}
\nabla \times \hE = i \omega  \hmu \hH + \hJ_e  \mbox{ in } \Omega, \\[6pt]
\nabla \times \hH = - i \omega  \heps \hE + \hJ_m \mbox{ in } \Omega, 
\end{array}\right.
\end{equation}
\begin{equation}\label{bdry-ITE-E}
(\hE - E) \times \nu = 0 \mbox{ on } \partial \Omega, \quad \mbox{ and } \quad (\hH - H) \times \nu = 0 \mbox{ on } \partial \Omega, 
\end{equation}
is well-posed for some $\omega \in \mC$ where $(J_e, J_m, \hJ_e, \hJ_m)$ is the input, which belongs to  an appropriate functional space. Moreover, a key fact is to prove that the corresponding transformation which maps the input  $(J_e, J_m, \hJ_e, \hJ_m)$ to the output $(E, H, \hE, \hH)$ is   compact. It is worth mentioning that the compactness is one of the crucial/critical difference between the study of the Maxwell equations and the Helmholtz equation. In our analysis, the functional space for the input is well-chosen so that the compactness property holds (see \eqref{def-Hspace}) for $\omega$ in some domain. For example, these facts hold under the assumptions of \Cref{thm-main}  provided that  $i \omega = |\omega| e^{i \pi/4}$, i.e., $\omega =  |\omega| e^{- i \pi/4}$ and $|\omega|$ is large. To this end, we analyze the corresponding Cauchy problem with constant coefficients in a  half-space  (\Cref{pro-HS}).  Using the decay of Maxwell equations (\Cref{lem-decay}), we can prove the uniqueness for  \eqref{sys-ITE-E} and \eqref{bdry-ITE-E}. To establish the existence of a solution, the limiting absorption principle is used,  and several processes involving the Fredholm theory of compact operator are applied. Again the choice of the functional space to ensure the compactness plays an important role here in order to use the Fredholm theory. Deriving \eqref{thm-main-cond} and handling the compactness are the key difference in the analysis of this paper and the one for the scalar case \cite{MinhHung17}. 

The Cauchy problem also naturally appears in the context of negative index materials after using reflections as initiated in \cite{Ng-Complementary}.  The well-posedness and the limiting absorption principle for the Helmholtz equations with sign-changing coefficients
was developed in \cite{Ng-WP} using the Fourier and multiplier approach. Recently, with Sil, the second author investigate these problems for the Maxwell equations \cite{NgSil}. Both papers \cite{Ng-WP},  \cite{NgSil}  deal with the stability question of negative index materials and are the starting point for the analysis of the discreteness of transmission eigenvalues for the Helmholtz equation \cite{MinhHung17} and Maxwell's equations in this work.   Other aspects and applications of negative index materials involving the stability and instability the Cauchy problem \eqref{sys-ITE-E} and \eqref{bdry-ITE-E} are discussed in \cite{Ng-Superlensing-Maxwell, Ng-CALR-O-M, Ng-Negative-Cloaking-M} and the references therein. 

The paper is organized as follows. In \Cref{sect-notation}, we introduce several notations used frequently in this paper. \Cref{sect-HS} is devoted to the analysis in the half space. The main result in this section is \Cref{pro-HS}. Condition~\eqref{thm-main-cond} will appear very  naturally there. Finally, we present the proof of \Cref{thm-main} in \Cref{sect-proof}. 
The choice of the right functional space plays an important role there so that the Fredholm theory can be applied.  The proof of \Cref{pro} is also given in this section.

\section{Notations}\label{sect-notation}

The following notations are used frequently throughout the paper. Denote 
$$
\mR^3_+ = \Big\{x = (x_1, x_2, x_3) \in \mR^3; \; x_3 > 0 \Big\}
$$
and 
$$
\mR^3_0 = \Big\{x = (x_1, x_2, x_3) \in \mR^3; \; x_3 = 0 \Big\}. 
$$
Let $\Omega$ be a bounded,  open subset of $\mR^3$ and of class $C^2$,  or $\Omega = \mR^3_+$. We define the spaces
\begin{equation*}
H(\curl, \Omega) = \Big\{u \in [L^2(\Omega)]^3; \nabla \times u \in [L^2(\Omega)]^3 \Big\}, 
\end{equation*}
\begin{equation*}
H_0(\curl, \Omega) = \Big\{u \in H(\curl, \Omega) ;  u \times \nu = 0 \mbox{ on } \partial \Omega \Big\}, 
\end{equation*}
\begin{equation*}
H(\dive, \Omega) = \Big\{u \in [L^2(\Omega)]^3 ;  \dive u  \in L^2(\Omega) \Big\}. 
\end{equation*}
Set $\Gamma = \partial \Omega$,  and for $s = -1/2$, or $1/2$, define the trace space
\begin{equation*}
H^{s}_{\dive} (\Gamma) = \Big\{u \in [H^{s}(\Gamma)]^3;  u \cdot \nu = 0 \mbox{ and } \dive_{\Gamma} u  \in H^{s}(\partial \Omega) \Big\}.  
\end{equation*}
For a vector field $u$ defined in a subset of $\mR^3$, $u_j$ denotes its $j$-th component for $1 \le j \le 3$. 
We also denote, for $s > 0$,  
\begin{equation}\label{def-Os}
\Omega_s = \Big\{x \in \Omega; \mbox{dist}(x, \partial \Omega) < s \Big\}. 
\end{equation}

\section{Analysis on a half space} \label{sect-HS}
In order to simplify presentation, we let $k \in \mC$  be $k:= i \omega$. Let  $\eps$, $\mu$, $\heps$, $\hmu$ be four symmetric, uniformly elliptic matrix-valued  functions defined in $\mR^3_+$. In this section, 
we are interested in the following Cauchy problem for Maxwell's equations in $\mR^3_+$, with
$J_e, J_m, \hJ_e, \hJ_m \in L^2(\mR^3_+)$ and  $f_e, f_m \in H^{-1/2}_{\dive} (\mR^3_0)$, 
\begin{equation}\label{sys-C}
\left\{\begin{array}{c}
\nabla \times E = k \mu H + J_e \mbox{ in } \mR^3_+, \\[6pt]
\nabla \times H = - k \eps E + J_m \mbox{ in } \mR^3_+, 
\end{array}\right. \quad \left\{\begin{array}{c}
\nabla \times \hE = k \hmu \hH + \hJ_e \mbox{ in } \mR^3_+, \\[6pt]
\nabla \times \hH = - k \heps \hE + \hJ_m \mbox{ in } \mR^3_+, 
\end{array}\right.
\end{equation}
and \footnote{$e_3 = (0, 0, 1) \in \mR^3$.}
\begin{equation}\label{bdry-C}
(\hE - E) \times e_3 = f_e \mbox{ on } \mR^3_0, \quad \mbox{ and } \quad (\hH - H) \times e_3 = f_m \mbox{ on } \mR^3_0 .  
\end{equation}

We begin with proving the following lemma.
\begin{lemma}\label{lem-HS}  Let  $\gamma > 0$ and  $k  \in \mC$ with $\big| \Im (k^2) \big| \ge \gamma |k|^2 $ and $|k| \ge 1$. Furthermore, let $\Lambda \ge 1$ and $\eps$, $\mu$ be two positive constants such that $\Lambda^{-1} \le \eps, \,  \mu \le \Lambda$. For $J_e, J_m \in [L^2(\mR^3_+)]^3$,  there exists a unique solution $(E, H) \in [L^2(\mR^3)]^{6}$  of the system 
\begin{equation}\label{lem-HS-sys}
\left\{\begin{array}{c}
\nabla \times E= k \mu H  \mbox{ in } \mR^3_+, \\[6pt]
\nabla \times H = - k \eps E + J_m \mbox{ in } \mR^3_+, \\[6pt]
E \times e_3 = 0  \mbox{ on } \mR^3_0. 
\end{array}\right. 
\end{equation}
Moreover, for some positive constant $C$ depending only on $\Lambda$ and $\gamma$, 
\begin{equation}\label{lem-HS-est1}
 \| (E, H) \|_{L^2(\mR^3_+)} \le \frac{C}{|k|}   \| J_m \|_{L^2(\mR^3_+)}, 
\end{equation}
and  if $\dive J_m \in L^2(\Omega)$,  then 
\begin{equation}\label{lem-HS-est2}
\| (E, H) \|_{H^1(\mR^3_+)} \le C  \left(  \| J_m\|_{L^2(\mR^3_+)} + \frac{1}{|k|}  \| \dive J_m \|_{L^2(\mR^3_+)} \right).  
\end{equation}
\end{lemma}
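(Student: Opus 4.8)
The plan is to reduce the first-order Maxwell system to a second-order equation for $E$ alone and solve it by the Fourier transform in the tangential variables $(x_1,x_2)$. First I would eliminate $H$: from the first equation $H = (k\mu)^{-1}\nabla\times E$, and substituting into the second gives $\nabla\times\nabla\times E = k\mu(-k\eps E + J_m)$, i.e.
\begin{equation*}
\nabla\times(\nabla\times E) + k^2\eps\mu\, E = k\mu\, J_m \quad\text{in }\mR^3_+,
\end{equation*}
with the boundary condition $E\times e_3 = 0$ on $\mR^3_0$. Because $\eps,\mu$ are \emph{constant}, $\nabla\times\nabla\times E = \nabla(\dive E) - \Delta E$; taking the divergence of the equation yields $k^2\eps\mu\,\dive E = k\mu\,\dive J_m$, so $\dive E = (k\eps)^{-1}\dive J_m$ and the equation becomes a vector Helmholtz-type equation $-\Delta E + k^2\eps\mu\, E = k\mu\, J_m - (k\eps)^{-1}\nabla(\dive J_m)$. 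This is the key simplification the constant-coefficient hypothesis buys us.

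Next I would solve this on the half-space with the mixed boundary conditions coming from $E\times e_3 = 0$ (which fixes $E_1 = E_2 = 0$ on $\mR^3_0$) together with the Neumann-type condition on $E_3$ inherited from $\dive E = (k\eps)^{-1}\dive J_m$ on the boundary. Concretely, apply the partial Fourier transform in $(x_1,x_2)$ with dual variable $\xi' = (\xi_1,\xi_2)$; each component then satisfies an ODE in $x_3 > 0$ of the form $-\partial_{x_3}^2\hat E_j + (|\xi'|^2 + k^2\eps\mu)\hat E_j = \hat F_j$, and one solves it with the appropriate odd/even reflection (Dirichlet for $j=1,2$, Neumann for $j=3$) via the explicit Green's function. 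The crucial point is the symbol $\tau(\xi') := (|\xi'|^2 + k^2\eps\mu)^{1/2}$: the hypothesis $|\Im(k^2)| \ge \gamma|k|^2$ guarantees that $k^2\eps\mu$ stays a uniform distance (comparable to $\gamma\Lambda^{-2}|k|^2$) away from the negative real axis, hence $\Re\,\tau(\xi') \ge c(\gamma,\Lambda)(|\xi'| + |k|)$ uniformly in $\xi'$, which makes the Green's function exponentially decaying and gives the resolvent-type bounds. Uniqueness follows because a homogeneous solution in $L^2$ must, after Fourier transform, be a combination of $e^{\pm\tau x_3}$, and $L^2$-integrability on $x_3>0$ together with $\Re\,\tau > 0$ forces it to vanish.

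For the estimates, I would work in Fourier space. From the explicit solution formula one reads off $\|\hat E(\xi',\cdot)\|_{L^2(0,\infty)} \lesssim |\tau(\xi')|^{-2}\,(|k| + |\xi'|/|k|)\,\|\hat F\|$ type bounds; using $\Re\,\tau \gtrsim |\xi'| + |k|$ and Plancherel, the worst factor is $|k|/\tau^2 \lesssim 1/|k|$, which yields \eqref{lem-HS-est1} for $E$, and then $H = (k\mu)^{-1}\nabla\times E$ together with the gradient bounds on $E$ gives the same for $H$. For \eqref{lem-HS-est2} I would estimate one extra derivative: each of $\partial_{x_3}$ and multiplication by $\xi'$ costs a factor $|\tau| \lesssim |\xi'| + |k|$, which is absorbed against the two powers of $\tau$ in the denominator, leaving an $O(1)$ constant times $\|J_m\|$ plus the term $|k|^{-1}\|\dive J_m\|$ coming from the $\nabla(\dive J_m)$ source; the second-derivative bound then upgrades $H$ to $H^1$ as well. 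The main obstacle I anticipate is bookkeeping: carefully tracking how the powers of $|k|$, $|\xi'|$ and $\tau(\xi')$ combine across the four scalar ODEs and the source term $k\mu J_m - (k\eps)^{-1}\nabla(\dive J_m)$, and verifying that the reflection used for $E_3$ is consistent with the divergence constraint on the boundary so that the constructed $(E,H)$ genuinely solves \eqref{lem-HS-sys} rather than merely the decoupled Helmholtz system.
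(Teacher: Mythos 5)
Your route is genuinely different from the paper's. The paper also reduces to the curl--curl equation $\nabla\times(\nabla\times E)+k^2\eps\mu E=k\mu J_m$, but then it stops computing: existence, uniqueness and \eqref{lem-HS-est1} come from the variational formulation in $H_0(\curl,\mR^3_+)$ via Lax--Milgram, the coercivity being exactly the observation that taking the real and imaginary parts of the identity with test function $E$ and using $|\Im(k^2)|\ge\gamma|k|^2$ controls $\int|\nabla\times E|^2+|k|^2|E|^2$ by $\int|J_m|^2$. The $H^1$ bound \eqref{lem-HS-est2} is then obtained not by symbol calculus but by the Gaffney inequality on the half-space, applied once to $E$ (using $E\times e_3=0$, $\dive E=(k\eps)^{-1}\dive J_m$, $\nabla\times E=k\mu H$) and once to $H$ (using $H\cdot e_3=0$, $\dive H=0$). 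Your explicit tangential-Fourier/Green's-function computation is the kind of analysis the paper reserves for \Cref{pro-HS}, where it is carried out only for the \emph{homogeneous} system ($J=0$) and where the explicit symbols are actually needed; for the present lemma it is more machinery than the statement requires.

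Two points in your plan need repair before it is a proof. First, the Neumann condition for $E_3$ that you propose to read off from $\dive E=(k\eps)^{-1}\dive J_m$ ``on the boundary'' is not well defined: $\dive J_m\in L^2(\mR^3_+)$ has no trace on $\mR^3_0$, so you cannot prescribe $\partial_{x_3}E_3$ there; you would have to keep the ODE system for $(\hat E_1,\hat E_2,\hat E_3)$ coupled through the $\nabla(\dive E)$ term, imposing only $E_1=E_2=0$ on $\mR^3_0$, or else regularize $J_m$ and pass to the limit. Second, the lemma asserts existence and \eqref{lem-HS-est1} for $J_m\in[L^2(\mR^3_+)]^3$ with \emph{no} hypothesis on $\dive J_m$, in which case your source term $(k\eps)^{-1}\nabla(\dive J_m)$ lies only in $H^{-2}$; the symbol count still closes ($|\xi|^2/(|k|\,|\tau|^2)\le C/|k|$), but applying the half-space Green's function to such a source and verifying that the result solves \eqref{lem-HS-sys} again forces a density argument you have not supplied. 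The variational formulation sidesteps both issues, which is presumably why the paper uses it; if you want to keep the Fourier approach, prove the lemma first for smooth compactly supported $J_m$ and conclude by the a priori estimates.
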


\begin{remark} \rm We emphasize here that the constant  $C$ appearing  in \eqref{lem-HS-est1} and \eqref{lem-HS-est2} is independent of $k$.  
\end{remark}


\begin{proof} We have, from the system of $(E, H)$, 
\begin{equation}\label{lem-HS-sysE}
\nabla \times (\nabla \times E) + k^2 \eps \mu E =  k \mu J_m \mbox{ in } \mR^3_+. 
\end{equation}
Multiplying \eqref{lem-HS-sysE} by $\bar \varphi$ (the conjugate of $\varphi$) with $\varphi \in H_0(\curl, \mR^3_+)$,  and integrating by parts yields 
\begin{equation}\label{lem-HS-bilinear}
\int_{\mR^3_+} \langle \nabla \times E,  \nabla \times \varphi \rangle + k^2 \eps \mu \int_{\mR^3} \langle E,  \varphi \rangle  
= \int_{\mR^3} k \mu \langle J_m, \varphi \rangle. 
\end{equation}
Take $\varphi = E$. Since $\big| \Im (k^2) \big| \ge \gamma |k|^2 $ and $|k| \ge 1$, after considering the imaginary part and the real part of \eqref{lem-HS-bilinear}, we obtain 
$$
\int_{\mR^3_+} |\nabla \times E|^2 + |k|^2 |E|^2 \le  C  \int_{\mR^3_+} |J_m|^2, 
$$
which implies \eqref{lem-HS-est1} since $\nabla \times E = k \mu H$ in $\mR^3_+$. The uniqueness of $(E, H)$ follows.  

To derive \eqref{lem-HS-est2}, we note that 
$$
\| \nabla \times E\|_{L^2(\mR^3_+)} \le C \| J_m \|_{L^2(\mR^3_+)}, 
$$
$$
\| \dive E\|_{L^2(\mR^3_+)} \le \frac{C}{|k |} \| \dive J_m \|_{L^2(\mR^3_+)}, 
$$
$$
\| E\|_{L^2(\mR^3_+)} \le C \| J_m \|_{L^2(\mR^3_+)}, 
$$
$$
E \times e_3 = 0 \mbox{ on } \mR^3_+. 
$$
It follows from the Gaffney inequality, see e.g. \cite[Theorems 3.7]{GR86}, \cite[Theorem 1]{CDS18},  that  $E \in H^1(\mR^3_+)]^3$ and 
\begin{equation}\label{lem-HS-e1}
\| E \|_{H^1(\mR^3_+)} \le C  \| J_m \|_{L^2(\mR^3_+)} + \frac{C}{|k|} \| \dive J_m \|_{L^2(\mR^3_+)}. 
\end{equation}
We also have 
$$
\| \nabla \times H \|_{L^2(\mR^3_+)} \le C \| J_m \|_{L^2(\mR^3_+)}, 
$$
$$
\| \dive H\|_{L^2(\mR^3_+)}  = 0, 
$$
$$
\| H\|_{L^2(\mR^3_+)} \le C \| J_m \|_{L^2(\mR^3_+)}, 
$$
and, since $E \times e_3 = 0 $ on $\mR^3_+$,   
$$
H \cdot e_3 = 0 \mbox{ on } \mR^3_+. 
$$
It follows from Gaffney inequality again,  see e.g. \cite[Theorems 3.9]{GR86}, \cite[Theorem 1]{CDS18}, that $H \in H^1(\mR^3_+)$ and 
\begin{equation}\label{lem-HS-e2}
\| H \|_{H^1(\mR^3_+)} \le C  \| J_m  \|_{L^2(\mR^3_+)} + \frac{C}{|k|} \| \dive J_e \|_{L^2(\mR^3_+)}. 
\end{equation}
Combining \eqref{lem-HS-e1} and \eqref{lem-HS-e2} yields \eqref{lem-HS-est2}.

To prove the existence  of $(E, H)$, we first apply the Lax-Milgram theory for variational formula given in  \eqref{lem-HS-bilinear} where the bilinear form is defined by the LHS and the linear functional is defined by the RHS in the Hilbert space $H_0(\curl, \mR^3_+)$. We then derive that  there exists  a solution $E \in H_0(\curl, \mR^3_+)$ of \eqref{lem-HS-sysE}. Set 
$$
H = \frac{1}{k \mu} \nabla \times E \mbox{ in } \mR^3_+. 
$$
Then 
$$
\nabla \times E = k \mu H  \mbox{ in } \mR^3_+, 
$$
and 
$$
\nabla \times H =  \frac{1}{k \mu} \nabla \times  \big(\nabla \times E \big) = - k \eps E + J_m \mbox{ in } \mR^3_+. 
$$
In other words, $(E, H) \in [L^2(\mR^3_+)]^6$ is a solution of \eqref{lem-HS-sys}.  The proof is complete. 
\end{proof}

We now state the main result of this section, which plays a key role in the proof of \Cref{thm-main}. 

\begin{proposition}\label{pro-HS}  Let $\alpha \in \mC$ with $\alpha^2 \in \mR$ and $|\alpha| = 1$,  $\gamma > 0$,  $k  \in \mC$ with $|\Im(k^2)| \ge \gamma |k|^2$, $|\Im(\alpha^2 k^2)| \ge \gamma |k|^2$,  and $|k| \ge 1$, and let $\Lambda \ge 1$  and $\eps$, $\mu$, $\heps$, $\hmu$ be four positive constants such that 
$$
\Lambda^{-1} \le \eps, \,  \mu,  \, \heps, \,  \hmu \le \Lambda. 
$$
Assume that, for some $\Lambda_1 > 0$ 
$$
|\eps -  \heps| \ge \Lambda_1, \quad |\mu - \hmu| \ge \Lambda_1, \quad \mbox{ and } \quad |\eps/ \mu -  \heps/ \hmu| \ge \Lambda_1. 
$$
Let $J_e, J_m, \hJ_e, \hJ_m \in H(\dive, \mR^3_+)$ with $
(J_{e, 3} - \hJ_{e, 3}, J_{m, 3} - \hJ_{m, 3}) \in [H^{1/2}(\mR^3_0)]^2$, 
and let $f_e, f_m \in H^{1/2}(\dive,  \mR^3_0)$. 
There exists a unique solution $(E, H, \hE, \hH) \in [L^2(\mR^3)]^{12}$  of  the system 
\begin{equation}\label{sys-C*}
\left\{\begin{array}{c}
\nabla \times E = k \mu H + J_e \mbox{ in } \mR^3_+, \\[6pt]
\nabla \times H = - k \eps E + J_m \mbox{ in } \mR^3_+, 
\end{array}\right. \quad \left\{\begin{array}{c}
\nabla \times \hE =  \alpha k \hmu \hH + \hJ_e \mbox{ in } \mR^3_+, \\[6pt]
\nabla \times \hH = - \alpha k \heps \hE + \hJ_m \mbox{ in } \mR^3_+, 
\end{array}\right.
\end{equation}
\begin{equation}\label{bdry-C*}
(\hE - E) \times e_3 = f_e \mbox{ on } \mR^3_0, \quad \mbox{ and } \quad (\hH - H) \times e_3 = f_m \mbox{ on } \mR^3_0.  
\end{equation}
Moreover,  we have 
\begin{align}\label{pro-HS-est}
C \Big( \| (E, H, \hE, \hH)  & \|_{H^1(\mR^3_+)} + |k| \,  \| (E, H, \hE, \hH) \|_{L^2(\mR^3_+)} \Big) \\[6pt]
\le \quad  & \| (J_e, J_m, \hJ_e, \hJ_m)\|_{L^2(\mR^3_+)}  
+ \frac{1}{|k|}  \| (\dive J_e, \dive J_m, \dive \hJ_e, \dive \hJ_m)\|_{L^2(\mR^3_+)} \nonumber \\[6pt]
& + \frac{1}{|k|}  \| (J_{e, 3} - \hJ_{e, 3}, J_{m, 3} - \hJ_{m, 3}) \|_{H^{1/2}(\mR^3_0)} 
+  |k|^{1/2} \|(f_e , f_m) \|_{L^2(\mR^3_0)} \nonumber\\[6pt]  
& + \|(f_e , f_m) \|_{H^{1/2}(\mR^3_0)} + 
\frac{1}{|k|} \|(\dive_{\Gamma} f_e , \dive_{\Gamma} f_m) \|_{H^{1/2} (\mR^3_0)}, \nonumber
\end{align}
for some positive constant $C$ depending only on  $\gamma$, $\Lambda$,  and $\Lambda_1$. 
\end{proposition}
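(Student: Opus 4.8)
The plan is to reduce the coupled Cauchy problem \eqref{sys-C*}--\eqref{bdry-C*} to a collection of decoupled half-space problems of the type handled in \Cref{lem-HS}, and to use the boundary conditions \eqref{bdry-C*} to absorb the coupling as suitable source terms. First I would observe that the difference fields play a distinguished role: since $(E,H)$ satisfies a Maxwell system with parameters $(\eps,\mu)$ and $(\hE,\hH)$ one with parameters $(\alpha k;\heps,\hmu)$, neither pair individually satisfies homogeneous tangential boundary data, but \eqref{bdry-C*} tells us that the tangential jumps are exactly $(f_e,f_m)$. The strategy is therefore: (1) solve, via \Cref{lem-HS} applied separately to the hatted and unhatted systems, auxiliary problems that carry the interior sources $(J_e,J_m)$ and $(\hJ_e,\hJ_m)$ with \emph{homogeneous} tangential electric (resp.\ magnetic) data, obtaining a first approximation; (2) correct the remaining tangential mismatch on $\mR^3_0$ by solving homogeneous Maxwell systems (no interior source) in each half-space with prescribed nonzero tangential data built from $(f_e,f_m)$ and from the traces of the auxiliary fields. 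The point where condition~\eqref{thm-main-cond} enters is step (2): matching \emph{both} tangential components of $E-\hE$ and of $H-\hH$ across $\mR^3_0$ leads, after Fourier transform in the tangential variables $x'=(x_1,x_2)$, to a $4\times 4$ linear algebraic system for the amplitudes of the four decaying modes (two for each half-space, recalling that each component of a homogeneous Maxwell field satisfies a Helmholtz-type equation whose bounded solutions in $\mR^3_+$ decay like $e^{-x_3\sqrt{|\xi'|^2+k^2\eps\mu}}$ etc.), and the determinant of that system must be bounded below uniformly in $\xi'$ and $k$.

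The analytical core is thus the following. After tangential Fourier transform, write each homogeneous field in $\mR^3_+$ as a superposition of the divergence-free decaying exponential solutions with the two characteristic rates $\lambda:=\sqrt{|\xi'|^2+k^2\eps\mu}$ and (for the hatted system) $\hat\lambda:=\sqrt{|\xi'|^2+\alpha^2k^2\heps\hmu}$, the branch chosen with positive real part so that the solutions lie in $L^2(\mR^3_+)$ (this is where $|\Im(k^2)|\ge\gamma|k|^2$ and $|\Im(\alpha^2k^2)|\ge\gamma|k|^2$ are used, guaranteeing $\Re\lambda,\Re\hat\lambda\gtrsim |k|$ and hence both ellipticity of the exponents and the decay needed for the $L^2$ and $H^1$ bounds). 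Impose the four scalar boundary conditions coming from \eqref{bdry-C*}. The resulting $4\times4$ matrix $\mathcal M(\xi',k)$ has entries that are algebraic in $\xi',\lambda,\hat\lambda,\eps,\mu,\heps,\hmu$; I would compute $\det\mathcal M$ explicitly and show it factors (up to nonvanishing algebraic prefactors) into pieces controlled by the three quantities $\eps-\heps$, $\mu-\hmu$, $\eps/\mu-\heps/\hmu$. In particular, in the large-$|\xi'|$ regime the leading behaviour of $\det\mathcal M$ is governed by $(\eps/\mu-\heps/\hmu)$, while the low-frequency ($|\xi'|$ bounded by $|k|$) regime brings in $\eps-\heps$ and $\mu-\hmu$; the hypotheses $|\eps-\heps|,|\mu-\hmu|,|\eps/\mu-\heps/\hmu|\ge\Lambda_1$ then yield $|\det\mathcal M(\xi',k)|\ge c\,(\text{normalizing factor})$ with $c=c(\gamma,\Lambda,\Lambda_1)>0$. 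Inverting $\mathcal M$ with this lower bound and tracking the powers of $\lambda,\hat\lambda,|\xi'|$ that appear, one reads off the amplitudes of the correction fields in terms of $\widehat{f_e},\widehat{f_m}$ and the Fourier transforms of the auxiliary traces; integrating the resulting profiles in $x_3$ and using Plancherel produces exactly the norms on the right-hand side of \eqref{pro-HS-est} — the weights $|k|^{1/2}\|(f_e,f_m)\|_{L^2}$, $\|(f_e,f_m)\|_{H^{1/2}}$, and $|k|^{-1}\|(\dive_\Gamma f_e,\dive_\Gamma f_m)\|_{H^{1/2}}$ arising from the trace-theory scaling of the half-space Helmholtz kernel with parameter $|k|$, while the $|k|^{-1}\|(J_{e,3}-\hJ_{e,3},J_{m,3}-\hJ_{m,3})\|_{H^{1/2}}$ term comes from the compatibility of the normal components of the sources on the interface.

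To assemble the final estimate: the contribution of step (1) is bounded directly by \Cref{lem-HS} (the terms $\|(J_e,\dots)\|_{L^2}+|k|^{-1}\|(\dive J_e,\dots)\|_{L^2}$, together with the $|k|$-weighted $L^2$ and the $H^1$ control), and the contribution of step (2) is bounded by the Fourier-multiplier analysis just described; adding them and using the triangle inequality gives \eqref{pro-HS-est}. Uniqueness follows from the same circle of ideas: if all data vanish, then the difference fields solve a homogeneous problem, and the a priori estimate (the $|k|$-weighted $L^2$ bound, which for zero data reads $\|(E,H,\hE,\hH)\|_{L^2}\le 0$) forces the solution to be zero; alternatively one invokes the decay/uniqueness built into \Cref{lem-HS} twice. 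The main obstacle — and the heart of the argument — is the explicit determinant computation for $\mathcal M(\xi',k)$ and, above all, establishing the \emph{uniform} lower bound $|\det\mathcal M|\gtrsim(\text{normalizer})$ across all tangential frequencies $\xi'$ and all admissible $k$; this is precisely the complementing-type condition of Agmon--Douglis--Nirenberg specialized to this coupled Maxwell system, and it is where the three inequalities in \eqref{thm-main-cond} are seen to be not merely sufficient but the natural nondegeneracy requirement. Once that algebraic lower bound is in hand, everything else is bookkeeping of Sobolev norms under the $|k|$-scaling.
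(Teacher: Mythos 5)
Your proposal takes essentially the same route as the paper's proof: first absorb the interior sources $(J_e,J_m,\hJ_e,\hJ_m)$ via auxiliary problems solved with \Cref{lem-HS} (with homogeneous tangential electric, resp.\ magnetic, data), then tangentially Fourier-transform the remaining source-free system and invert the algebraic system for the boundary amplitudes of the decaying modes, the uniform invertibility being exactly where the three contrast conditions and the hypothesis $|\Im(k^2)|\ge\gamma|k|^2$ enter (cf.\ \eqref{pro-HS-em} and \eqref{pro-HS-em-2}). One small slip worth noting: in the paper's denominators the coefficient of $|\xi|^2$ involves $\eps-\heps$ (resp.\ $\mu-\hmu$) while the coefficient of $k^2$ involves $\eps/\mu-\heps/\hmu$, so the large-$|\xi'|$ regime is controlled by the former and the regime $|\xi'|\lesssim|k|$ by the latter --- the opposite of the assignment you describe --- but this does not affect the viability of the argument.
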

 
Recall that,  by our convention, $J_{e, 3}, \, J_{m, 3}, \, \hJ_{e, 3}, \hJ_{m, 3}$ denote the third component of $J_{e}, \, J_{m}, \, \hJ_{e}, \hJ_{m}$. It is worth noting that the constant $C$ is independent of $k$. 
 
\begin{proof} Let $(E^1, H^1), \, (E^2, H^2), \, (\hE^1, \hH^1), \, (\hE^2, \hH^2) \in [L^2(\mR^3_+)]^6$ be respectively the unique solutions  of the following systems
\begin{equation*}
\left\{\begin{array}{c}
\nabla \times E^1= k \mu H^1  \mbox{ in } \mR^3_+, \\[6pt]
\nabla \times H^1 = - k \eps E^1 + J_m \mbox{ in } \mR^3_+, \\[6pt]
E^1 \times e_3 = 0  \mbox{ on } \mR^3_0,  
\end{array}\right. \quad \left\{\begin{array}{c}
\nabla \times \hE^1 = \alpha k \hmu \hH^1  \mbox{ in } \mR^3_+, \\[6pt]
\nabla \times \hH^1 = - \alpha k \heps \hE^1 + \hJ_m \mbox{ in } \mR^3_+, \\[6pt]
\hE^1 \times e_3 = 0 \mbox{ on } \mR^3_0, 
\end{array}\right.
\end{equation*}
\begin{equation*}
\left\{\begin{array}{c}
\nabla \times E^2= k \mu H^2 + J_e \mbox{ in } \mR^3_+, \\[6pt]
\nabla \times H^2 = - k \eps E^1 \mbox{ in } \mR^3_+, \\[6pt]
H^2 \times e_3 = 0  \mbox{ on } \mR^3_0,  
\end{array}\right. \quad \left\{\begin{array}{c}
\nabla \times \hE^2 = \alpha k \hmu \hH^2 + \hJ_e \mbox{ in } \mR^3_+, \\[6pt]
\nabla \times \hH^2 = - \alpha k \heps \hE^2 \mbox{ in } \mR^3_+, \\[6pt]
\hH^2 \times e_3 = 0 \mbox{ on } \mR^3_0. 
\end{array}\right.
\end{equation*}
Applying \Cref{lem-HS}, we obtain 
\begin{multline}\label{pro-HS-e1}
\| (E^1, H^1, E^2, H^2, \hE^1, \hH^1, \hE^2, \hH^2) \|_{H^1(\mR^3_+)} \\[6pt]
 + |k| \, \| (E^1, H^1, E^2, H^2, \hE^1, \hH^1, \hE^2, \hH^2) \|_{L^2(\mR^3_+)} \\[6pt]
\le  C \Big(  \| (J_e, J_m, \hJ_e, \hJ_m)\|_{L^2(\mR^3_+)} + 
\frac{1}{|k|}   \| (\dive J_e, \dive J_m, \dive \hJ_e, \dive \hJ_m)\|_{L^2(\mR^3_+)} \Big). 
\end{multline}
From the trace theory, we derive from \eqref{pro-HS-e1} that 
\begin{multline}\label{pro-HS-e2}
\| (E^1, H^1, E^2, H^2, \hE^1, \hH^1, \hE^2, \hH^2) \|_{H^{1/2}(\mR^3_0)} \\[6pt] \le C \| (E^1, H^1, E^2, H^2, \hE^1, \hH^1, \hE^2, \hH^2) \|_{H^1(\mR^3_+)} \\[6pt]
\le  C \Big(  \| (J_e, J_m, \hJ_e, \hJ_m)\|_{L^2(\mR^3_+)} + 
\frac{1}{|k|}   \| (\dive J_e, \dive J_m, \dive \hJ_e, \dive \hJ_m)\|_{L^2(\mR^3_+)} \Big). 
\end{multline}

We have
\begin{align*}
\|\dive_{\Gamma} (H^1 \times  e_3) -& \dive_{\Gamma} (\hH^1 \times  e_3)\|_{H^{1/2}(\mR^3_0)} \\[6pt]
  = & \; \|(\nabla \times H^1)  \cdot e_3  - (\nabla \times \hH^1)  \cdot e_3 \|_{H^{1/2}(\mR^3_0)} \\[6pt]
 = &\;
\| (- k \eps E^1 + J_m)  \cdot e_3 - (- \alpha k \heps \hE^1 + \hJ_m)  \cdot e_3 \|_{H^{1/2}(\mR^3_0)} \\[6pt]
 \le & \;C |k| \,  \| (E^1, \hE^1) \|_{H^{1/2}(\mR^3_0)} + C \|J_{m, 3} - \hJ_{m, 3} \|_{H^{1/2}(\mR^3_0)}. 
\end{align*}
It follows from  \eqref{pro-HS-e2} that 
\begin{multline}\label{pro-HS-e3}
\|\dive_{\Gamma} (H^1 \times  e_3) - \dive_{\Gamma} (\hH^1 \times  e_3)\|_{H^{1/2}(\mR^3_0)} 
\le C |k| 
 \Big( \| (J_e, J_m, \hJ_e, \hJ_m)\|_{L^2(\mR^3_+)}   \\[6pt]
+ \frac{1}{|k|}   \| (\dive J_e, \dive J_m, \dive \hJ_e, \dive \hJ_m)\|_{L^2(\mR^3_+)} + \frac{1}{|k|} \|J_{m, 3} - \hJ_{m, 3} \|_{H^{1/2}(\mR^3_0)} \Big). 
\end{multline}

Similarly, we obtain 
\begin{multline}\label{pro-HS-e4}
\|\dive_{\Gamma} (E^1 \times  e_3) - \dive_{\Gamma} (\hE^1 \times  e_3)\|_{H^{1/2}(\mR^3_0)} \le 
C |k| 
 \Big( \| (J_e, J_m, \hJ_e, \hJ_m)\|_{L^2(\mR^3_+)}  \\[6pt] + 
\frac{1}{|k|}   \| (\dive J_e, \dive J_m, \dive \hJ_e, \dive \hJ_m)\|_{L^2(\mR^3_+)} + \frac{1}{|k|} \|J_{e, 3} - \hJ_{e, 3} \|_{H^{1/2}(\mR^3_0)} \Big). 
\end{multline}

Using the fact, for $u \in H^1(\mR^3_+)$,  
\begin{equation}\label{ineq-interpolation}
\int_{\mR^3_0} |u|^2 \le 2 \int_{\mR^3_+} |u| |\partial_{x_3} u| \le 2 \| u\|_{L^2(\mR^3_+)} \| \nabla u\|_{L^2(\mR^3_+)}, 
\end{equation}
we have 
\begin{multline*}
\|(E^1, H^1, E^2, H^2, \hE^1, \hH^1, \hE^2, \hH^2) \|_{L^2(\mR^3_0)} \\[6pt]\le C \| (E^1, H^1, E^2, H^2, \hE^1, \hH^1, \hE^2, \hH^2) \|_{L^2(\mR^3_+)}^{1/2} \| (E^1, H^1, E^2, H^2, \hE^1, \hH^1, \hE^2, \hH^2) \|_{H^1(\mR^3_+)}^{1/2}.  
\end{multline*}
This yields 
\begin{multline}\label{pro-HS-e5}
k^{1/2}\|(E^1, H^1, E^2, H^2, \hE^1, \hH^1, \hE^2, \hH^2) \|_{L^2(\mR^3_0)} \\[6pt]
\le C |k| \,  \| (E^1, H^1, \hE^1, \hH^1) \|_{L^2(\mR^3_+)} + C \| (E^1, H^1, \hE^1, \hH^1) \|_{H^1(\mR^3_+)}.  
\end{multline}

By considering $(E - E^1 - E^2 , H - H^1 - H^2, \hE - \hE^1 - \hE^2, \hH - \hH^1 - \hH^2)$, from \eqref{pro-HS-e1}, \eqref{pro-HS-e2}, \eqref{pro-HS-e3},  \eqref{pro-HS-e4}, and  \eqref{pro-HS-e5}, w.l.o.g. one might assume that 
$$
J_e = J_m = \hJ_e = \hJ_m = 0 \mbox{ in } \mR^3_+. 
$$
This will be assumed later on. Thus 
\begin{equation}\label{pro-HS-sys-C-*}
\left\{\begin{array}{c}
\nabla \times E = k \mu H \mbox{ in } \mR^3_+, \\[6pt]
\nabla \times H = - k \eps E  \mbox{ in } \mR^3_+, 
\end{array}\right. \quad \left\{\begin{array}{c}
\nabla \times \hE = \alpha k \hmu \hH  \mbox{ in } \mR^3_+, \\[6pt]
\nabla \times \hH = - \alpha k \heps \hE  \mbox{ in } \mR^3_+, 
\end{array}\right.
\end{equation}
\begin{equation}\label{pro-HS-bdry-C-*}
(\hE - E) \times e_3 = f_e \mbox{ on } \mR^3_0, \quad \mbox{ and } \quad (\hH - H) \times e_3 = f_m \mbox{ on } \mR^3_0.  
\end{equation}

Using the identity for a vector field $A$
$$
\nabla \times (\nabla \times A) = \nabla (\nabla \cdot A) - \Delta A, 
$$
we obtain  the following equations for $E$ and $\hE$
\begin{equation}\label{pro-HS-EE}
\left\{\begin{array}{c} \Delta E - k^2 \eps \mu E = 0 \mbox{ in } \mR^3_+, \\[6pt]
\Delta \hE - \alpha^2 k^2 \heps \hmu \hE = 0 \mbox{ in } \mR^3_+
\end{array}\right. 
\end{equation}
(recall that here  the coefficients are all constants). In the following, we denote the Fourier transform with respect to $(x_1, x_2) \in \mR^2$ of an appropriate function $u: \mR^3_+ \to \mC$ by $u^{\cF}$, i.e., 
$$
u^{\cF}(\xi, x_3) = \frac{1}{2 \pi} \int_{\mR^2} u(x_1, x_2, x_3) e^{ -  i (x_1 \xi_1 + x_2 \xi_2)} \, d x_1 \, d x_2 \mbox{ for } (\xi, x_3) =  (\xi_1, \xi_2, x_3) \in \mR^3_+. 
$$
Similar notation is used for an appropriate function defined on $\mR^3_0$. 

Consider the first two equations of the system for $E$ and the first two equations of the system for $\hE$ in \eqref{pro-HS-EE}. Solving these equations using the Fourier transform with respect to $(x_1, x_2)$ yields 
\begin{equation}\label{pro-HS-E}
E^{\cF}_j(\xi, x_3) = a_j(\xi) e^{- x_3 \sqrt{|\xi|^2 + k^2 \eps \mu }} \quad \mbox{ in } \mR^3_+, 
\end{equation}
\begin{equation}\label{pro-HS-hE}
\hE^{\cF}_j(\xi, x_3) = \ha_j(\xi) e^{- x_3 \sqrt{|\xi|^2 + \alpha^2 k^2 \heps \hmu }}  \quad \mbox{ in } \mR^3_+, 
\end{equation}
for $j=1, 2$,  where  
$$
a_j(\xi) = E^{\cF}_j(\xi, 0) \quad \mbox{ and } \quad \ha_j(\xi) = \hE^{\cF}_j(\xi, 0) \mbox{ for } \xi \in \mR^2. 
$$
We then have, with $a = (a_1, a_2)$ and $\ha = (\ha_1, \ha_2)$,  
\begin{equation}\label{pro-HS-ab1}
\ha(\xi) - a(\xi) = h (\xi) \mbox{ where } h(\xi) = - f_e^{\cF} (\xi, 0) \times e_3.  
\end{equation}
Here and in what follows, we identity a vector $(y_1, y_2, 0) \in \mR^3_0$ with  $(y_1, y_2) \in \mR^2$. 

Since $\dive E = 0$ in $\mR^3_+$, it follows that 
$$
\partial_{x_3} E_3 = - (\partial_{x_1} E_1 + \partial_{x_2} E_2) \quad  \mbox{ in } \mR^3_+.  
$$
This implies 
$$
\partial_{x_3} E_3^{\cF} (\xi, x_3) = -   i \xi_1 E_1^{\cF}   (\xi, x_3) - i \xi_2 E_2^{\cF}  (\xi, x_3) \quad \mbox{ in } \mR^3_+. 
$$
Using \eqref{pro-HS-E}, we obtain 
\begin{equation*}
\partial_{x_3} E_3^{\cF}(\xi, x_3) = -  i \xi \cdot a(\xi) e^{- x_3 \sqrt{|\xi|^2 + k^2 \eps \mu}} \quad \mbox{ in } \mR^3_+. 
\end{equation*}
We thus get
\begin{equation}\label{pro-HS-E3}
E_3^{\cF}(\xi, x_3) = - \int_{x_3}^\infty i \xi \cdot a(\xi) e^{- s \sqrt{|\xi|^2 + k^2 \eps \mu}} \, d s =   \frac{ i \xi \cdot a(\xi) e^{- x_3 \sqrt{|\xi|^2 + k^2 \eps \mu}}}{\sqrt{|\xi|^2 + k^2 \eps \mu}} \quad \mbox{ in } \mR^3_+. 
\end{equation}

Similarly, we have 
\begin{equation}\label{pro-HS-hE3}
\hE_3^{\cF}(\xi, x_3) =     \frac{ i  \xi \cdot \ha(\xi) e^{- x_3 \sqrt{|\xi|^2 + \alpha^2 k^2 \heps \hmu} }}{\sqrt{|\xi|^2 + \alpha^2 k^2 \heps \hmu}} \quad \mbox{ in } \mR^3_+. 
\end{equation}

Since $\hH \times e_3 - H \times e_3 = f_m$ on $\mR^3_0$, and $\nabla \times H = -  k \eps E$ and $\nabla \times \hH = - \alpha k \heps \hE$ in $\mR^3_+$,  it follows that   
$$
\alpha  \heps \hE_3 - \eps E_3  = - \frac{1}{k} \dive_{\mR^3_0} f_m \quad \mbox{ on } \mR^3_0. 
$$
Using \eqref{pro-HS-E3} and \eqref{pro-HS-hE3}, we derive that 
\begin{equation}\label{pro-HS-ab2}
 \frac{\alpha \heps  \xi \cdot \ha(\xi)}{\sqrt{|\xi|^2 + \alpha^2 k^2 \heps \hmu}} - \frac{\eps  \xi \cdot a(\xi)}{\sqrt{|\xi|^2 + k^2 \eps \mu}} = g: = \big( \frac{i}{k} \dive_{\mR^3_0} f_m  \big)^{\cF} \quad \mbox{ on } \mR^2. 
\end{equation}
Combining \eqref{pro-HS-ab1} and \eqref{pro-HS-ab2}, and noting $a = \ha - h$,  yield, on $\mR^2$,  
\begin{equation*}
\xi \cdot \ha \left( \frac{\alpha \heps}{\sqrt{|\xi|^2 + \alpha k^2 \heps \hmu}} -  \frac{ \eps }{\sqrt{|\xi|^2 +  k^2 \eps \mu}}  \right)  =  -  \frac{\eps}{\sqrt{|\xi|^2 + k^2 \eps \mu}} \xi \cdot h + g, 
\end{equation*}
which implies 
\begin{multline*}
\xi \cdot \ha  = \frac{ \sqrt{|\xi|^2 +k^2 \eps \mu } \sqrt{|\xi|^2 + \alpha^2 k^2 \heps \hmu } \Big(\eps \sqrt{|\xi|^2 + \alpha^2 k^2 \heps \hmu} + \alpha \heps \sqrt{|\xi|^2 + k^2 \eps \mu}  \Big)}{(\alpha^2 \heps^2 - \eps^2) |\xi|^2 + \alpha^2 k^2 \eps \heps \mu \hmu (\heps/ \hmu - \eps/ \mu)} \times \\[6pt]
\times \left( - \frac{\eps}{\sqrt{|\xi|^2 + k^2 \eps \mu}} \xi \cdot h + g \right). 
\end{multline*}
Since  $\eps \neq \heps$, $\eps/\mu \neq \heps/ \hmu$, $\alpha^2  = \pm 1$,  and $|\Im (k^2)| \ge  \gamma |k|^2$, $|k| \ge 1$, we get 
\begin{equation}\label{pro-HS-em}
|(\alpha^2 \heps^2 - \eps^2) |\xi|^2 + \alpha^2 k^2 \eps \heps \mu \hmu (\heps/ \hmu - \eps/ \mu)| \ge C (|\xi|^2 + |k|^2). 
\end{equation}
We deduce that  
\begin{equation*}
|\xi \cdot \ha(\xi) | \le C \Big(|\xi \cdot h(\xi)| + \sqrt{|\xi|^2 + |k|^2} |g(\xi)| \Big), 
\end{equation*}
which yields, since $a = \ha - h$,  
\begin{equation}\label{pro-HS-est1-1}
|\xi \cdot a(\xi) | + |\xi \cdot \ha(\xi) | \le C \Big(|\xi \cdot h(\xi)| + \sqrt{|\xi|^2 + |k|^2} |g(\xi)| \Big). 
\end{equation}

We have, in $\mR^3_+$, 
\begin{equation*}
k \mu H_1 = \partial_{x_2} E_3 - \partial_{x_3} E_2, \quad \alpha k \hmu \hH_1 = \partial_{x_2} \hE_3 - \partial_{x_3} \hE_2. 
\end{equation*}
Since $\hH_1 - H_1 = f_{m, 2} : = f_m \cdot e_2$  with $e_2 = (0, 1, 0)$ on $\mR^3_0$, it follows from \eqref{pro-HS-E}, \eqref{pro-HS-hE}, \eqref{pro-HS-E3}, \eqref{pro-HS-hE3} that 
\begin{multline*}
 \frac{1}{\alpha \hmu} \left(- \frac{ \xi_2   \xi \cdot \ha(\xi) }{\sqrt{|\xi|^2 +\alpha^2  k^2 \heps \hmu}} + \sqrt{|\xi|^2 + \alpha^2 k^2 \heps \hmu} \ha_2(\xi)   \right) \\[6pt]
 =\frac{1}{\mu} \left( - \frac{ \xi_2   \xi \cdot a(\xi) }{\sqrt{|\xi|^2 + k^2 \eps \mu}} + \sqrt{|\xi|^2 + k^2 \eps \mu} a_2(\xi)  \right) + k f_{m, 2}^{\cF} (\xi). 
\end{multline*}
We derive from \eqref{pro-HS-ab1} that 
\begin{multline}
\frac{1}{\alpha \hmu} \sqrt{|\xi|^2 + \alpha^2 k^2 \heps \hmu} \ha_2(\xi)  - \frac{1}{\mu} \sqrt{|\xi|^2 + k^2 \eps \mu} \ha_2(\xi) \\[6pt]
= \left(\frac{ \xi \cdot \ha(\xi) }{\alpha \hmu \sqrt{|\xi|^2 + \alpha^2 k^2 \heps \hmu}  } - \frac{ \xi \cdot a(\xi) }{\mu \sqrt{|\xi|^2 + k^2 \eps \mu} } \right) \xi_2 -  \frac{1}{\mu} \sqrt{|\xi|^2 + k^2 \eps \mu} h_2(\xi)
 + k f_{m, 2}^{\cF} (\xi). 
\end{multline}
We thus obtain
\begin{multline}\label{pro-HS-E*}
\ha_2(\xi) = \alpha \frac{ \mu \sqrt{|\xi|^2 + \alpha^2 k^2 \heps \hmu} +  \alpha \hmu \sqrt{|\xi|^2 + k^2 \eps \mu}}{ (\mu^2 - \alpha^2 \hmu^2) |\xi|^2 + \alpha^2 k^2 \eps \heps \mu \hmu (\mu/ \eps - \hmu/ \heps) } \\[6pt]
\times 
\left\{ \left(\frac{\mu \xi \cdot \ha (\xi)}{\alpha \sqrt{|\xi|^2 + \alpha^2 k^2 \heps \hmu}  } - \frac{\hmu \xi \cdot a(\xi)}{\sqrt{|\xi|^2 + k^2 \eps \mu} } \right)  \xi_2  - \hmu \sqrt{|\xi|^2 + k^2 \eps \mu} h_2(\xi) +  k \mu \hmu  f_{m, 2}^{\cF}(\xi) \right\}. 
\end{multline}
Since  $\mu \neq \hmu$, $\eps/\mu \neq \heps/ \hmu$, $\alpha^2 = \pm 1$, and $|\Im( k^2)| \ge \gamma |k|^2$, $|k| \ge 1$, we get 
\begin{equation}\label{pro-HS-em-2}
|(\mu^2 - \alpha^2 \hmu^2) |\xi|^2 + \alpha^2 k^2 \eps \heps \mu \hmu (\mu/ \eps - \hmu/ \heps)| \ge C (|\xi|^2 + |k|^2). 
\end{equation}
Using \eqref{pro-HS-em-2}, we derive from \eqref{pro-HS-E*} that 
$$
|\ha_2(\xi)| \le  \frac{C |\xi|}{|\xi|^2 + |k|^2} \Big( |\xi \cdot a (\xi)|  + |\xi \cdot \ha (\xi)|  \Big) +  C \Big( |h_2(\xi)| +  |f_{m, 2}^{\cF}(\xi)| \Big), 
$$
which yields, since $\ha - a = h$,
\begin{equation}\label{pro-HS-est1-2}
|a_2(\xi)| + |\ha_2(\xi)| \le  \frac{C |\xi|}{|\xi|^2 + |k|^2} \Big( |\xi \cdot a (\xi)|  + |\xi \cdot \ha (\xi)|  \Big) +  C \Big( |h(\xi)| +  |f_{m}^{\cF}(\xi)| \Big). 
\end{equation}

Combining  \eqref{pro-HS-est1-1} and \eqref{pro-HS-est1-2}
yields 
\begin{equation*}
 (|a_2(\xi)|^2 + |\ha_2(\xi)|^2 ) \sqrt{|\xi|^{2} + |k|^2} 
\le C  (|h|^2 + |f_{m}^{\cF}|^2\big) \sqrt{|\xi|^2 + |k|^2} + C |g(\xi)|^2 |\xi|. 
\end{equation*}
From the definition of $g$ and $h$, we obtain  
\begin{multline}\label{pro-HS-est2}
\int_{\mR^2} (|a_2(\xi)|^2 + |\ha_2(\xi)|^2 ) \sqrt{|\xi|^{2} + |k|^2} \\[6pt]
\le C \left( \| (f_e, f_m) \|_{H^{1/2}(\mR^3_0)}^2 + |k|  \, \| (f_e, f_m) \|_{L^2(\mR^3_0)}^2 + \frac{1}{|k|^2}  \| (\dive_\Gamma f_e, \dive_\Gamma f_m) \|_{H^{1/2}(\mR^3_0)}^2 \right). 
\end{multline}

Similarly, we reach  
\begin{multline}\label{pro-HS-est3}
\int_{\mR^2} (|a_1(\xi)|^2 + |\ha_1(\xi)|^2 ) \sqrt{|\xi|^{2} + |k|^2} \\[6pt]
\le C \left( \| (f_e, f_m) \|_{H^{1/2}(\mR^3_0)}^2 + |k|  \, \| (f_e, f_m) \|_{L^2(\mR^3_0)}^2 + \frac{1}{|k|^2}  \| (\dive_\Gamma f_e, \dive_\Gamma f_m) \|_{H^{1/2}(\mR^3_0)}^2 \right). 
\end{multline}

On the other hand, from \eqref{pro-HS-E}, \eqref{pro-HS-hE}, \eqref{pro-HS-E3}, and  \eqref{pro-HS-hE3},  we have 
\begin{equation}\label{pro-HS-est4}
\int_{\mR^3_+} |(\nabla E, \nabla \hE)|^2  + |k|^2 |(E, \hE)|^2 \le C \int_{\mR^2} |(a(\xi), \ha(\xi)|^2 \sqrt{|\xi|^{2} + |k|^2}. 
\end{equation}
Combining  \eqref{pro-HS-est2}, \eqref{pro-HS-est3}, and \eqref{pro-HS-est4} yields 
\begin{multline}\label{pro-HS-est5}
\int_{\mR^3_+} |(\nabla E, \nabla \hE)|^2  + |k|^2 |(E, \hE)|^2  \\[6pt]
\le C \left( \| (f_e, f_m) \|_{H^{1/2}(\mR^3_0)}^2 + |k| \,  \| (f_e, f_m) \|_{L^2(\mR^3_0)}^2 + \frac{1}{|k|^2}  \| (\dive_\Gamma f_e, \dive_\Gamma f_m) \|_{H^{1/2}(\mR^3_0)}^2 \right). 
\end{multline}

Similarly, we obtain  
\begin{multline}\label{pro-HS-est6}
\int_{\mR^3_+} |(\nabla H, \nabla \hH)|^2  + |k|^2 |(H, \hH)|^2  \\[6pt]
\le C \left( \| (f_e, f_m) \|_{H^{1/2}(\mR^3_0)}^2 + |k|  \, \| (f_e, f_m) \|_{L^2(\mR^3_0)}^2 + \frac{1}{|k|^2}  \| (\dive_\Gamma f_e, \dive_\Gamma f_m) \|_{H^{1/2}(\mR^3_0)}^2 \right). 
\end{multline}

The conclusion now follows from \eqref{pro-HS-est5} and \eqref{pro-HS-est6}. The proof is complete. 
\end{proof}

As a consequence of \Cref{pro-HS}, we obtain 

\begin{corollary}\label{cor-HS}  

 Let $\alpha \in \mC$ with $\alpha^2 \in \mR$ and $|\alpha| = 1$,  $\gamma > 0$,  $k  \in \mC$ with $|\Im(k^2)| \ge \gamma |k|^2$, $|\Im(\alpha^2 k^2)| \ge \gamma |k|^2$,  and $|k| \ge 1$, and   let $\eps, \,  \mu,  \, \heps, \,  \hmu \in [L^\infty(\mR^3_+)]^{3 \times 3}$ be symmetric, uniformly elliptic,  and of class $C^1$. Let $\Lambda \ge 1$ be such that 
$$
\Lambda^{-1} \le \eps, \, \mu, \,  \heps, \,  \hmu  \le \Lambda \mbox{ in } B_1 \cap \mR^3_+  \quad  \mbox{ and } \quad \|(\eps, \mu, \heps, \hmu)  \|_{C^1(\mR^3_+ \cap B_1)} \le \Lambda. 
$$
Assume that $\eps(0), \, \heps(0), \, \mu(0), \hmu(0)$ are isotropic,  and for some $\Lambda_1 \ge 0$
$$
|\eps (0) -  \heps(0)| \ge \Lambda_1, \quad |\mu(0) - \hmu(0)| \ge \Lambda_1, \quad \mbox{ and } \quad |\eps(0)/ \mu (0) - \heps(0)/ \hmu(0)| \ge \Lambda_1. 
$$
Let $J_e, J_m, \hJ_e, \hJ_m \in H(\dive, \mR^3_+)$ and assume that  $(E, H, \hE, \hH) \in [L^2(\mR^3)]^{12}$ is a solution  of \eqref{sys-C*} and \eqref{bdry-C*} with $f_e = f_m = 0$. There exist $0 < r_0 < 1$ and $k_0 > 1$  depending only on $\gamma$, $\Lambda$, and $\Lambda_1$ such that if   the supports of $E, \, H, \, \hE, \, \hH$ are in $B_{r_0} \cap \overline{\mR^3_+}$ and $|k| \ge k_0$, then 
\begin{multline}\label{cor-HS-est}
\| (E, H, \hE, \hH) \|_{H^1(\mR^3_+)} + |k| \,  \| (E, H, \hE, \hH) \|_{L^2(\mR^3_+)} 
 \le C \Big(   \| (J_e, J_m, \hJ_e, \hJ_m)\|_{L^2(\mR^3_+)} \\[6pt] + \frac{1}{|k|} \| (\dive J_e, \dive J_m, \dive \hJ_e, \dive \hJ_m)\|_{L^2(\mR^3_+)} +  \frac{1}{|k|} \| (J_{e, 3} - \hJ_{e, 3}, J_{m, 3} -  \hJ_{m, 3})\|_{H^{1/2}(\mR^3_0)}  \Big), 
\end{multline}
for some positive constant $C$ depending only on $\gamma$,  $\Lambda$,  and $\Lambda_1$. 
\end{corollary}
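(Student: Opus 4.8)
The plan is to deduce \Cref{cor-HS} from the constant-coefficient result \Cref{pro-HS} by freezing the coefficients at the origin. The idea is that, since the fields are supported in the small half-ball $B_{r_0}\cap\overline{\mR^3_+}$ and the coefficients are $C^1$, the difference between $\eps,\mu,\heps,\hmu$ and their (isotropic, hence scalar) values $\eps(0),\mu(0),\heps(0),\hmu(0)$ is $O(r_0)$ on the support; moving the corresponding terms to the right-hand side turns \eqref{sys-C*} into the frozen-coefficient Cauchy problem with the same boundary data $f_e=f_m=0$ and with perturbed source terms, to which \Cref{pro-HS} applies, and the perturbation is small enough to be absorbed back into the left-hand side provided $r_0$ is small and $|k|$ is large.

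Concretely, I would write $\mu=\mu(0)+M_\mu$, $\eps=\eps(0)+M_\eps$, $\hmu=\hmu(0)+M_{\hmu}$, $\heps=\heps(0)+M_{\heps}$, where the frozen values lie in $[\Lambda^{-1},\Lambda]$ and satisfy the three nondegeneracy inequalities with constant $\Lambda_1$ (this is precisely where the hypotheses $\eps(0)\neq\heps(0)$, $\mu(0)\neq\hmu(0)$, $\eps(0)/\mu(0)\neq\heps(0)/\hmu(0)$ enter, via \eqref{pro-HS-em} and \eqref{pro-HS-em-2}), while the bound $\|(\eps,\mu,\heps,\hmu)\|_{C^1(\mR^3_+\cap B_1)}\le\Lambda$ gives $|M_\mu(x)|\le\Lambda|x|$, $|\nabla M_\mu(x)|\le\Lambda$, and likewise for the others, so $\|M_\mu\|_{L^\infty}\le\Lambda r_0$ on the support. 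Then $(E,H,\hE,\hH)$ solves the frozen-coefficient system of \Cref{pro-HS} with the same $\alpha$ and $k$, with $f_e=f_m=0$, and with sources $\widetilde J_e=J_e+kM_\mu H$, $\widetilde J_m=J_m-kM_\eps E$, $\widetilde{\hJ}_e=\hJ_e+\alpha kM_{\hmu}\hH$, $\widetilde{\hJ}_m=\hJ_m-\alpha kM_{\heps}\hE$.

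Granting that $(E,H,\hE,\hH)\in[H^1(\mR^3_+)]^{12}$ — so that the $\widetilde J$'s lie in $H(\dive,\mR^3_+)$ and the third components of $\widetilde J_e-\widetilde{\hJ}_e$ and $\widetilde J_m-\widetilde{\hJ}_m$ lie in $H^{1/2}(\mR^3_0)$ — I would apply \Cref{pro-HS} to this frozen system to get \eqref{pro-HS-est} for the $\widetilde J$'s, and then bound the commutator contributions. The $L^2$ part obeys $\|kM_\mu H\|_{L^2}\le\Lambda r_0\,|k|\,\|H\|_{L^2}$, i.e. $\Lambda r_0$ times $|k|\|(E,H,\hE,\hH)\|_{L^2}$ on the left; the divergence part, using $\dive(kM_\mu H)=k(\dive M_\mu)\cdot H+kM_\mu:\nabla H$, is controlled by $\tfrac{\Lambda}{|k|}$ times $|k|\|H\|_{L^2}$ plus $\Lambda r_0$ times $\|H\|_{H^1}$; and the trace part, via the trace theorem, by $\tfrac1{|k|}\|(kM_\mu H)_3\|_{H^{1/2}(\mR^3_0)}=\|(M_\mu H)_3\|_{H^{1/2}(\mR^3_0)}\le C\|M_\mu H\|_{H^1(\mR^3_+)}\le C\Lambda(\|H\|_{L^2}+r_0\|\nabla H\|_{L^2})$, again of the same two types. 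Doing the same for the $E,\hE,\hH$ pieces and choosing $r_0$ small and $k_0$ large in terms of $\gamma,\Lambda,\Lambda_1$ only, all commutator terms are absorbed into the left-hand side, leaving precisely the right-hand side of \eqref{cor-HS-est}.

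The main obstacle, and the point needing genuine care, is the regularity input used above: the hypothesis only provides an $[L^2]^{12}$ solution, whereas the frozen-coefficient sources $\widetilde J_e,\dots$ involve $\nabla E,\nabla H,\dots$ through their divergences, so one must first show that any such $L^2$ solution is automatically in $H^1(\mR^3_+)$. I would get this from the same circle of ideas already in the paper: $\curl E$, $\curl H$, $\dive(\eps E)=\tfrac1k\dive J_m$ and $\dive(\mu H)=-\tfrac1k\dive J_e$ are all in $L^2$, so $E,H$ (and similarly $\hE,\hH$) satisfy the hypotheses of a Gaffney-type estimate for variable Lipschitz coefficients, and the boundary relations $(\hE-E)\times e_3=(\hH-H)\times e_3=0$ together with the complementing/ADN structure exhibited in the proof of \Cref{pro-HS} should promote this to $H^1$ up to $\mR^3_0$; alternatively, in the applications of this corollary the solution is already constructed in $H^1$. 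Once the regularity is secured, the rest is the freeze-and-absorb bookkeeping above, whose only subtlety is that every error term must carry a factor $r_0$ (absorbed by the $H^1$-norm on the left) or a factor $|k|^{-1}$ (absorbed by $|k|$ times the $L^2$-norm on the left), with all constants kept independent of $k$.
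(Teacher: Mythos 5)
Your proposal is correct and takes essentially the same route as the paper's proof: freeze the coefficients at the origin, apply \Cref{pro-HS} to the system with perturbed sources $J_e + k(\mu-\mu(0))H$, etc., and absorb the commutator terms, each of which carries a factor $r_0$ or $|k|^{-1}$, into the left-hand side by choosing $r_0$ small and $k_0$ large. The $H^1$-regularity issue you flag is likewise implicit in the paper's argument (which also uses $\|\nabla E\|_{L^2}$ of the solution on the right-hand side before absorption), so your remark only makes the bookkeeping more careful, not different.
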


\begin{remark}  \rm The constant $C$ in \eqref{cor-HS-est} is independent of $k$. Concerning the isotropic properties of $\eps, \, \mu, \, \heps, \, \hmu$, 
we emphasize here that $\eps, \, \mu, \, \heps, \, \hmu$ are not required to be isotropic in $B_1 \cap \overline{\mR^3_+}$, we only assume that $\eps(0), \, \mu(0), \, \heps(0), \, \hmu(0)$ are.  
\end{remark}
Here and what follows $B_r$, for $r>0$ denotes the ball of radius $r$ centered at the origin.

\begin{proof}  We rewrite \eqref{sys-C*} under the form
\begin{equation*}
\left\{\begin{array}{c}
\nabla \times E = k \mu (0) H + J_e^{1}
\mbox{ in } \mR^3_+, \\[6pt]
\nabla \times H = - k \eps (0) E + J_m^{1} \mbox{ in } \mR^3_+, 
\end{array}\right. \quad \left\{\begin{array}{c}
\nabla \times \hE = \alpha k \hmu(0) \hH + \hJ_e^{1}   \mbox{ in } \mR^3_+, \\[6pt]
\nabla \times \hH = - \alpha k \heps (0) \hE + \hJ_m^1 \mbox{ in } \mR^3_+, 
\end{array}\right.
\end{equation*}
where, in $\mR^3_+$, 
$$
J_e^{1} (x)  = J_e (x) + k (\mu(x) - \mu(0) ) H (x) , \quad J_m^{1} (x)  = J_m (x)  - k (\eps(x) - \eps(0) ) E (x) , 
$$ 
$$
\hJ_e^{1} (x)  = \hJ_e (x)  + \alpha k (\hmu(x) - \hmu(0) ) \hH (x) , \quad \hJ_m^{1} (x)  = \hJ_m (x)  - \alpha k (\heps(x) - \heps(0) ) \hE (x) . 
$$  
From \Cref{pro-HS}, we obtain 
\begin{multline}\label{cor-HS-p1}
C \Big( \| (E, H, \hE, \hH) \|_{H^1(\mR^3_+)} + |k| \, \| (E, H, \hE, \hH) \|_{L^2(\mR^3_+)}  \Big) \\[6pt]
\le  \| (J_e^1, J_m^1, \hJ_e^1, \hJ_m^1)\|_{L^2(\mR^3_+)}  
+ \frac{1}{|k|}  \| (\dive J_e^1, \dive J_m^1, \dive \hJ_e^1, \dive \hJ_m^1)\|_{L^2(\mR^3_+)} \\[6pt]
 + \frac{1}{|k|}  \| (J_{e, 3}^1 - \hJ_{e, 3}^1, J_{m, 3}^1 - \hJ_{m, 3}^1) \|_{H^{1/2}(\mR^3_0)}. 
\end{multline}

On the other hand, from the definition of $(J_e^1, J_m^1, \hJ_e^1, \hJ_m^1)$, one has  
\begin{equation}\label{cor-HS-p2}
\| (J_e^1, J_m^1, \hJ_e^1, \hJ_m^1)\|_{L^2(\mR^3_+)} 
\le C  \| (J_e, J_m, \hJ_e, \hJ_m)\|_{L^2(\mR^3_+)}  + C  r_0 |k| \, \| (E, H, \hE, \hH) \|_{L^2(\mR^3_+)}
\end{equation}
\begin{multline}\label{cor-HS-p3}
\frac{1}{|k|}  \| (\dive J_e^1, \dive J_m^1, \dive \hJ_e^1, \dive \hJ_m^1)\|_{L^2(\mR^3_+)} 
 \le  \frac{C}{|k|}  \| \big(\dive J_e, \dive J_m, \dive \hJ_e, \dive \hJ_m \big) \|_{L^2(\mR^3_+)}  \\[6pt]
 +  C  \| (E, H, \hE, \hH) \|_{L^2(\mR^3_+)} 
 +  C r_0 \left\|  \left(\nabla E, \nabla H, \nabla \hE, \nabla \hH \right) \right\|_{L^2(\mR^3_+)}, 
\end{multline}
and 
\begin{multline}\label{cor-HS-p4}
 \frac{1}{|k|}  \| (J_{e, 3}^1 - \hJ_{e, 3}^1, J_{m, 3}^1 - \hJ_{m, 3}^1) \|_{H^{1/2}(\mR^3_0)} \le  \frac{C}{|k|}  \| (J_{e, 3} - \hJ_{e, 3}, J_{m, 3} - \hJ_{m, 3}) \|_{H^{1/2}(\mR^3_0)} \\[6pt]
 + C r_0 \| (E, H, \hE, \hH) \|_{H^{1}(\mR^3_+)} + C  \| (E, H, \hE, \hH) \|_{L^2(\mR^3_+)}. 
\end{multline}
Here in the last inequality, we involved the trace theory and  used  
\begin{multline*}
 \left\| \left((\mu (x) - \mu(0)) H , (\eps (x) - \eps(0)) E, (\hmu (x) - \hmu(0)) \hH,  (\heps (x) - \heps(0)) \hE  \right) \right\|_{H^{1}(\mR^3_+)}  \\[6pt]
 \le C r_0 \| (E, H, \hE, \hH) \|_{H^{1}(\mR^3_+)} + C \| (E, H, \hE, \hH) \|_{L^2(\mR^3_+)}. 
\end{multline*}
 
Combining \eqref{cor-HS-p1} - \eqref{cor-HS-p4} yields 
\begin{multline}\label{cor-HS-p5}
C \Big( \| (E, H, \hE, \hH) \|_{H^1(\mR^3_+)} + |k| \, \| (E, H, \hE, \hH) \|_{L^2(\mR^3_+)}  \Big) \\[6pt]
\le  \| (J_e, J_m, \hJ_e, \hJ_m)\|_{L^2(\mR^3_+)}  
+ \frac{1}{|k|}  \| (\dive J_e, \dive J_m, \dive \hJ_e, \dive \hJ_m)\|_{L^2(\mR^3_+)}   \\[6pt] +  \frac{1}{|k|} \| (J_{e, 3} -  \hJ_{e, 3}, J_{m, 3} -  \hJ_{m, 3})\|_{H^{1/2}(\mR^3_0)}  \\[6pt]
 (|k| r_0 +  1) \| (E, H, \hE, \hH) \|_{L^2(\mR^3_+)} 
 +   r_0 \|  ( E, H, \hE, \hH ) \|_{H^1(\mR^3_+)}. 
\end{multline}
Fix $r_0 = \min\{C / 4, 1/4\}$ where $C$ is the constant in \eqref{cor-HS-p5}. Take $k_0$ such that $k_0 r_0  \ge 1$. One then can absorb the last two terms of the RHS of \eqref{cor-HS-p5} by the LHS. The conclusion then  follows. 
\end{proof}

\section{Proof of \Cref{thm-main}} \label{sect-proof}

In this section, we give the proof of \Cref{thm-main}. We begin with a result which yields  the uniqueness  and the stability of \eqref{sys-ITE-E} and \eqref{bdry-ITE-E}.  

\begin{proposition}\label{pro-main-1} Let $\gamma > 0$,  $\alpha \in \mC$ with $\alpha^2 \in \mR$ and $|\alpha|=1$, and let $\eps, \,  \mu, \,  \heps, \,  \hmu \in [L^\infty(\Omega)]^{3 \times 3}$ be symmetric. Assume that there exist $\Lambda  \ge 1$, $\Lambda_1 > 0$, and $s_0 > 0$   such that 
\begin{equation*}
\Lambda^{-1} \le \eps, \, \mu, \, \heps, \,  \hmu \le \Lambda \mbox{ a.e. in } \Omega,  \quad \| (\eps, \mu, \heps, \hmu)  \|_{C^1(\bar \Omega_{s_0})} \le \Lambda, 
\end{equation*}
$$
\eps,\,  \mu,\,  \heps, \,  \hmu \mbox{ are isotropic on } \partial \Omega, 
$$
and, for $x \in \partial \Omega$,  
$$
|\eps (x) -  \heps(x)| \ge \Lambda_1, \quad |\mu(x) - \hmu(x)| \ge \Lambda_1, \quad \mbox{ and } \quad |\eps(x)/ \mu (x) - \heps(x)/ \hmu(x)| \ge \Lambda_1. 
$$
There exist two positive constants $k_0 \ge 1$ and $C > 0$ depending only on $\Lambda$, $\Lambda_1$, $s_0$, $\gamma$,  and $\Omega$ such that for $k \in \mC$ with $|\Im{(k^2)}| \ge \gamma |k|^2$ and $|k| \ge k_0$,  for every $(J_e, J_m, \hJ_e, \hJ_m) \in [H(\dive, \Omega)]^4$ with $(J_{e}\cdot \nu - \hJ_{e} \cdot \nu, J_{m} \cdot \nu -  \hJ_{m} \cdot \nu) \in [H^{1/2}(\partial \Omega)]^2$,  and for every solution  $(E, H, \hE, \hH) \in [L^2(\Omega)]^{12}$ of 
\begin{equation}\label{sys-ITE-1}
\left\{\begin{array}{c}
\nabla \times E = k \mu H + J_e \mbox{ in } \Omega, \\[6pt]
\nabla \times H = - k \eps E + J_m \mbox{ in } \Omega, 
\end{array}\right. \quad \left\{\begin{array}{c}
\nabla \times \hE = \alpha k \hmu \hH + \hJ_e \mbox{ in } \Omega, \\[6pt]
\nabla \times \hH = - \alpha k \heps \hE + \hJ_m \mbox{ in } \Omega,  
\end{array}\right.
\end{equation}
\begin{equation}\label{bdry-ITE-1}
(\hE - E) \times \nu = 0 \mbox{ on } \partial \Omega, \quad \mbox{ and } \quad (\hH - H) \times \nu = 0 \mbox{ on } \partial \Omega, 
\end{equation}
we have 
\begin{multline}\label{pro-main-1-est}
 |k| \,  \| (E, H, \hE, \hH) \|_{L^2(\Omega)}  + \| (E, H, \hE, \hH) \|_{H^1(\Omega_{s_0/2})} 
 \le C \| (J_e, J_m, \hJ_e, \hJ_m)\|_{L^2(\Omega)} \\[6pt]
 + \frac{C}{|k|}  \| (\dive J_e,\dive  J_m, \dive \hJ_e,\dive  \hJ_m)\|_{L^2(\Omega)} +  \frac{C}{|k|}  \| (J_{e} \cdot \nu - \hJ_e \cdot \nu, J_m \cdot \nu -  \hJ_m \cdot \nu)\|_{H^{1/2}(\Omega)}  . 
\end{multline}
\end{proposition}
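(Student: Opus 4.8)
\noindent\emph{Proof plan.} The plan is to reduce \eqref{pro-main-1-est} to the half-space estimate \eqref{cor-HS-est} of \Cref{cor-HS} by localizing near $\partial\Omega$, and to handle the part of $\Omega$ bounded away from $\partial\Omega$ by a direct energy identity. First I would fix a finite partition of unity $\{\chi_j\}_{j=0}^N$ of $\overline\Omega$ with $\chi_0$ supported in $\Omega\setminus\overline{\Omega_{s_0/4}}$ (so that $\chi_0\equiv0$ on $\overline{\Omega_{s_0/2}}$ and $\sum_{j\ge1}\chi_j\equiv1$ on $\Omega_{s_0/2}$), and each $\chi_j$ for $1\le j\le N$ supported in a small open set $U_j$ meeting $\partial\Omega$. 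On each $U_j$ I would choose a $C^2$ chart $\Phi_j$ flattening $\partial\Omega\cap U_j$ onto a piece of $\mR^3_0$ and mapping $\Omega\cap U_j$ into $\mR^3_+$; after a rigid motion and using the graph representation of $\partial\Omega$, one can arrange $D\Phi_j$ to be the identity at the base point $y_j\in\partial\Omega$. This is where $\Omega\in C^2$ and the isotropy of $\eps,\mu,\heps,\hmu$ on $\partial\Omega$ enter: the transformation-optics pushforward of an isotropic tensor under a map with orthogonal unit-determinant Jacobian is again isotropic with the same scalar, so the four transplanted coefficients are $C^1$ near $\mR^3_0$, isotropic at $\Phi_j(y_j)$, and still satisfy the three gap conditions with the same $\Lambda_1$; shrinking $U_j$, I would also ensure that all transplanted fields are supported in $B_{r_0}\cap\overline{\mR^3_+}$ with $r_0$ as in \Cref{cor-HS}.

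\noindent\emph{Interior part.} Since $\chi_0E,\chi_0H$ (and $\chi_0\hE,\chi_0\hH$) are compactly supported in $\Omega$, the two Maxwell subsystems decouple there, and $\chi_0E,\chi_0H$ solve the first-order system with right-hand sides $\chi_0J_e+\nabla\chi_0\times E$ and $\chi_0J_m+\nabla\chi_0\times H$. Pairing the two equations with $\overline{\chi_0H}$ and $\overline{\chi_0E}$, integrating by parts (no boundary terms), and taking the real part, I get
\[
\Re(k)\int_\Omega\big(\mu|\chi_0H|^2+\eps|\chi_0E|^2\big)=\Re\Big(\text{a bilinear pairing of the data and }\nabla\chi_0\text{ with }(E,H)\Big).
\]
Since $|\Im(k^2)|=|2\Re(k)\Im(k)|\ge\gamma|k|^2$ and $|k|\ge1$ force $|\Re(k)|\ge c(\gamma)|k|$, a Young inequality yields $|k|\,\|\chi_0(E,H)\|_{L^2(\Omega)}\le C\|(J_e,J_m)\|_{L^2(\Omega)}+C\|(E,H)\|_{L^2(\supp\nabla\chi_0)}$; because $\alpha^2=\pm1$ one also has $|\Im(\alpha^2k^2)|=|\Im(k^2)|$ and $|\Re(\alpha k)|\ge c(\gamma)|k|$, so the same computation covers $\chi_0(\hE,\hH)$. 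The error term is harmless since $\supp\nabla\chi_0\subset\Omega_{s_0/4}$, which is covered by the boundary charts.

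\noindent\emph{Boundary part.} For fixed $1\le j\le N$ I would transplant $\chi_jE,\chi_jH,\chi_j\hE,\chi_j\hH$ to $\mR^3_+$ by $\Phi_j$, extend the transplanted $C^1$ coefficients to all of $\mR^3_+$ preserving symmetry and uniform ellipticity, and apply \Cref{cor-HS} (the condition $(\hE-E)\times\nu=0$ transplants to $f_e=f_m=0$). Up to $C^1$ chart factors the transplanted sources are $\chi_jJ_e+\nabla\chi_j\times E$, etc., so $\|\widetilde{J}_e\|_{L^2(\mR^3_+)}\le C\|J_e\|_{L^2(\Omega)}+C\|(E,H,\hE,\hH)\|_{L^2(\Omega)}$, and, using $\dive(\nabla\chi_j\times E)=-\nabla\chi_j\cdot(\nabla\times E)=-\nabla\chi_j\cdot(k\mu H+J_e)$ so the factor $|k|$ is compensated, $\tfrac1{|k|}\|\dive\widetilde{J}_e\|_{L^2(\mR^3_+)}\le\tfrac{C}{|k|}\|\dive J_e\|_{L^2(\Omega)}+C\|(E,H,\hE,\hH)\|_{L^2(\Omega)}$. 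For the trace term in \eqref{cor-HS-est}, the only commutator contribution to $\widetilde{J}_{e,3}-\widetilde{\hJ}_{e,3}$ on $\mR^3_0$ comes from $(\nabla\chi_j\times(E-\hE))\cdot\nu$ on $\partial\Omega$, and this \emph{vanishes}, since $(\hE-E)\times\nu=0$ makes $E-\hE$ parallel to $\nu$; the term thus reduces to $\tfrac1{|k|}$ times the $H^{1/2}$ norm of $\chi_j(J_e\cdot\nu-\hJ_e\cdot\nu)$ and its $J_m$ analogue. Transplanting the conclusion of \Cref{cor-HS} back to $\Omega$ ($\Phi_j$ being a $C^2$ diffeomorphism, it preserves $L^2$, $H^1$, and $H^{1/2}$ norms up to constants) then bounds $\|\chi_j(E,H,\hE,\hH)\|_{H^1(\Omega)}+|k|\,\|\chi_j(E,H,\hE,\hH)\|_{L^2(\Omega)}$ by the right-hand side of \eqref{pro-main-1-est} plus an error $C\|(E,H,\hE,\hH)\|_{L^2(\Omega)}$.

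\noindent\emph{Conclusion, and where the difficulty lies.} Summing the interior estimate and the $N$ boundary estimates, and using $E=\sum_{j\ge0}\chi_jE$ on $\Omega$ with $E=\sum_{j\ge1}\chi_jE$ on $\Omega_{s_0/2}$ (similarly for $H,\hE,\hH$), the left-hand side dominates $|k|\,\|(E,H,\hE,\hH)\|_{L^2(\Omega)}+\|(E,H,\hE,\hH)\|_{H^1(\Omega_{s_0/2})}$, while the total error is $C\|(E,H,\hE,\hH)\|_{L^2(\Omega)}$ with \emph{no} $H^1$-error (the commutators cost only $L^2$, and the boundary commutator vanished). Picking $k_0=k_0(\Lambda,\Lambda_1,s_0,\gamma,\Omega)$ large enough that $C\|(E,H,\hE,\hH)\|_{L^2(\Omega)}\le\tfrac12|k|\,\|(E,H,\hE,\hH)\|_{L^2(\Omega)}$ whenever $|k|\ge k_0$, I absorb the error and obtain \eqref{pro-main-1-est}; uniqueness is then the special case of zero data. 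The main obstacle, I expect, is the geometric transplantation: one must choose the boundary charts so that the transformed electromagnetic parameters remain isotropic at the base point and keep the gap conditions (hence the orthogonal Jacobians), and one must verify that the cutoff commutators contribute nothing to the $H^{1/2}$ trace term of \eqref{cor-HS-est}---which is exactly the place where $(\hE-E)\times\nu=0$ is used crucially, as opposed to the routine interior commutator bookkeeping.
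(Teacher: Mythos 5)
Your localization to the half space is essentially the paper's own argument: the same partition of unity subordinate to boundary charts whose Jacobian is a rotation at the base point (so that the pushforward preserves isotropy and the three gap conditions with the same $\Lambda_1$), the same application of \Cref{cor-HS} with $f_e=f_m=0$, and the same absorption of commutator errors for $|k|\ge k_0$. Your observation that $(\nabla\chi_j\times(E-\hE))\cdot\nu=\big((E-\hE)\times\nu\big)\cdot\nabla\chi_j=0$ is correct and is exactly what is needed (and used implicitly in the paper) to reduce the $H^{1/2}$ trace term to $\chi_j(J_e\cdot\nu-\hJ_e\cdot\nu)$; note the identity also shows this quantity involves only the tangential trace, hence is well defined for fields merely in $H(\curl,\Omega)$. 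Where you genuinely depart from the paper is the interior: the paper invokes the exponential decay estimate of \Cref{lem-decay} (proved with a weight $e^{|k|\varphi}$) to make $\|(E,H,\hE,\hH)\|_{L^2(\Omega\setminus\Omega_{\tau_0})}$ negligible, whereas you use the first-order energy identity $\Re(k)\int(\mu|\chi_0H|^2+\eps|\chi_0E|^2)=\cdots$ together with $|\Re(k)|\ge\gamma|k|/2$ (a consequence of $|\Im(k^2)|\ge\gamma|k|^2$), and likewise $|\Re(\alpha k)|\ge\gamma|k|/2$ since $\alpha^2=\pm1$. This is simpler and suffices here, since only an $L^2$ bound with error $C\|(E,H)\|_{L^2(\supp\nabla\chi_0)}$ is needed and that error is absorbed for large $|k|$; the paper's decay lemma is stronger than what this proposition actually requires.

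One point needs repair. You require simultaneously that each boundary chart be small enough for the transplanted fields to sit in $B_{r_0}\cap\overline{\mR^3_+}$, with $r_0=r_0(\gamma,\Lambda,\Lambda_1)$ from \Cref{cor-HS}, and that $\sum_{j\ge1}\chi_j\equiv1$ on $\Omega_{s_0/2}$. These are incompatible when $r_0<s_0/2$: boundary-centered charts of radius $\lesssim r_0$ cover only a collar $\Omega_{\tau_0}$ with $\tau_0\sim r_0$, so your boundary step yields the $H^1$ bound only on $\Omega_{\tau_0}$, while your interior identity yields only an $L^2$ bound elsewhere. The missing $H^1$ control on the intermediate shell $\Omega_{s_0/2}\setminus\Omega_{\tau_0}$ is exactly what the paper supplies in \eqref{pro-main-p3}: since the coefficients are $C^1$ on $\overline{\Omega_{s_0}}$, a cutoff together with curl--div (Gaffney) estimates gives $\|(E,H,\hE,\hH)\|_{H^1(\Omega_{s_0/2}\setminus\Omega_{\tau_0})}\le C|k|\,\|(E,H,\hE,\hH)\|_{L^2(\Omega_{s_0}\setminus\Omega_{\tau_0/2})}+\cdots$, which combined with your global $L^2$ bound closes the estimate. (Your parenthetical ``$\chi_0$ supported in $\Omega\setminus\overline{\Omega_{s_0/4}}$ so that $\chi_0\equiv0$ on $\overline{\Omega_{s_0/2}}$'' is also backwards, but that is cosmetic.) With this one additional interior elliptic estimate the proof is complete.
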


Recall that $\Omega_s$ is given in \eqref{def-Os}. 

\begin{proof} We use local charts for $\Gamma  = \partial \Omega$. 
In what follows, we denote  $Q = (-1, 1)^3$, $Q_{+} = Q \cap \mR^3_+$, and  $Q_{0}= Q  \cap \mR^3_0$.   

Let $m \ge 1$ and let   $\varphi_\ell \in  C^2_{c}(\mR^3)$, $U_\ell \subset \mR^3$  open ball, and $\cT_\ell : U_\ell \to Q$ with $1 \le \ell \le m$ be such that 
$ \cT_\ell (U_\ell \cap \Omega) = Q_+$, and $\cT_\ell(U_\ell \cap \Gamma) = Q_0$, $\supp \varphi_\ell \Subset U_\ell$, and $\Phi  = 1$ in a neighborhood of $\Gamma$, where 
$$
\Phi : = \sum_{\ell=1}^m \varphi_\ell \mbox{ in } \mR^3.
$$
In what follows, we also assume that the diameter of the support of $\varphi_\ell$ is sufficiently small and $\nabla \cT_\ell (\varphi_\ell^{-1}(0))$ is a rotation, i.e., $\big( \nabla \cT_\ell \nabla \cT_\ell^T \big)  (\varphi_\ell^{-1}(0))= I$. 
Set, in  $\Omega \cap U_\ell$,  
$$
(E^\ell, H^\ell, \hE^\ell, \hH^\ell) = (\varphi_\ell E, \varphi_\ell H, \varphi_\ell \hE, \varphi_\ell  \hH), 
$$
and 
\begin{equation*}
(J_e^\ell , J_m^\ell , \hJ_e^\ell , \hJ_m^\ell) = (\varphi_\ell  J_e  + \nabla \varphi_\ell  \times E, \varphi_\ell  J_m  +  \nabla \varphi_\ell  \times H, \varphi_\ell  \hJ_e  + \nabla \varphi_\ell  \times \hE, \varphi_\ell  \hJ_m  +  \nabla \varphi_\ell  \times \hH). 
\end{equation*}
We have 
\begin{equation}
\left\{\begin{array}{c}
\nabla \times E^\ell = k \mu H^\ell + J_e^\ell \mbox{ in } \Omega \cap U_\ell, \\[6pt]
\nabla \times H^\ell = - k \eps E^\ell + J_m^\ell \mbox{ in } \Omega \cap U_\ell, 
\end{array}\right. \quad \left\{\begin{array}{c}
\nabla \times \hE^\ell = \alpha k \hmu \hH^\ell + \hJ_e^\ell \mbox{ in } \Omega \cap U_\ell, \\[6pt]
\nabla \times \hH^\ell = - \alpha k \heps \hE^\ell + \hJ_m^\ell \mbox{ in } \Omega \cap U_\ell,  
\end{array}\right.
\end{equation}
\begin{equation}
(\hE^\ell - E^\ell) \times \nu = 0 \mbox{ on } \partial \Omega \cap U_\ell, \quad \mbox{ and } \quad (\hH^\ell - H^\ell) \times \nu = 0 \mbox{ on } \partial \Omega \cap U_\ell.  
\end{equation}

Given a diffeomorphism $\cT$ from an open $D$ onto an open $D'$, the following standard notations are used 
$$
\cT* u(x') = \nabla \cT(x) u(x), 
$$
\begin{equation*}
\cT_*a (x') = \frac{\nabla \cT(x) a(x) \nabla \cT^T(x)}{\det \nabla \cT(x)},  \quad \mbox{ and } \quad  \cT_*j (x')= \frac{\nabla \cT(x) j(x)}{\det \nabla \cT (x)},
\end{equation*}
with  $x' =\cT(x)$,  for a matrix-valued function $a$,  and  for vector fields $u$ and  $j$ defined in $D$.  
Set, in $Q_+$,  
$$
(\tE^\ell, \tH^\ell, \thE^\ell, \thH^\ell) =   \big(\cT_\ell * E^\ell, \cT_\ell * H^\ell, \cT_\ell * \hE^\ell, \cT_\ell * \hH^\ell \big), 
$$
$$
(\eps^\ell, \mu^\ell, \heps^\ell, \hmu^\ell) = ({\cT_\ell}_* \eps, {\cT_\ell}_* \mu, {\cT_\ell}_* \heps, {\cT_\ell}_* \hmu), 
$$
$$
(\tJ_e^\ell, \tJ_m^\ell, \thJ_e^\ell, \thJ_m^\ell) = ({\cT_\ell}_* J_e^\ell, {\cT_\ell}_* J_m^\ell, {\cT_\ell}_* \hJ_e^\ell, {\cT_\ell}_* \hJ_m^\ell). 
$$

By a change of variables, see e.g. \cite[Lemma 7]{Ng-Superlensing-Maxwell}, 
\begin{equation}\label{sys-ITE-2}
\left\{\begin{array}{c}
\nabla \times \tE^\ell = k \mu^\ell \tH^\ell + \tJ_e^\ell \mbox{ in }  Q_+, \\[6pt]
\nabla \times \tH^\ell = - k \eps^\ell \tE^\ell + \tJ_m^\ell \mbox{ in } Q_+, 
\end{array}\right. \quad \left\{\begin{array}{c}
\nabla \times \thE^\ell = \alpha k \hmu \thH^\ell + \thJ_e^\ell \mbox{ in } Q_+, \\[6pt]
\nabla \times \thH^\ell = - \alpha k \heps \thE^\ell + \thJ_m^\ell \mbox{ in } Q_+,  
\end{array}\right.
\end{equation}
\begin{equation}\label{bdry-ITE-2}
(\thE^\ell - \tE^\ell) \times \nu = 0 \mbox{ on } Q_0, \quad \mbox{ and } \quad (\thH^\ell - \tH^\ell) \times \nu = 0 \mbox{ on } Q_0.  
\end{equation}

Since $\nabla \cT_\ell (\varphi_\ell^{-1}(0))$ is a rotation,  and 
$\eps, \, \mu, \, \heps, \, \hmu$ are isotropic on $\partial \Omega$, one has 
$$
\eps^\ell (0), \,  \mu^\ell (0), \,  \heps^\ell(0), \,  \hmu^\ell (0) \mbox{ are isotropic}. 
$$
By considering the diameter of $\supp \varphi_\ell$ sufficiently small, one can then apply \Cref{cor-HS} to $(\tE^\ell, \tH^\ell, \thE^\ell, \thH^\ell)$.  We then obtain 
\begin{multline}\label{pro-main-e1}
C \Big( \| ( \tE^\ell, \tH^\ell, \thE^\ell, \thH^\ell) \|_{H^1(Q_+)}  + |k| \,  \| ( \tE^\ell, \tH^\ell, \thE^\ell, \thH^\ell) \|_{L^2(Q_+)} \Big) \\[6pt]  \le  \| (\tJ_e^\ell, \tJ_m^\ell, \thJ_e^\ell, \thJ_m^\ell)\|_{L^2(Q_+)} + \frac{1}{|k|}  \| (\dive  \tJ_e^\ell, \dive  \tJ_m^\ell, \dive  \thJ_e^\ell, \dive  \thJ_m^\ell)\|_{L^2(Q_+)} \\[6pt]
+ \frac{1}{|k|}  \| (\tJ_e^\ell \cdot e_3 -  \thJ_e^\ell \cdot e_3, \tJ_m^\ell \cdot e_3 -  \thJ_m^\ell \cdot e_3)\|_{H^{1/2}(Q_0)}. 
\end{multline}

We have, by \cite[Corollary 3.59]{Monk03}, 
$$
\| (\dive \tJ_e^\ell, \dive \tJ_m^\ell, \dive \thJ_e^\ell, \dive \thJ_m^\ell)\|_{L^2(Q_+)} \le C \| (\dive J_e^\ell, \dive J_m^\ell, \dive \hJ_e^\ell, \dive \hJ_m^\ell)\|_{L^2(\Omega \cap U_\ell)}
$$
and we also obtain 
$$
\| (\tJ_e^\ell \cdot e_3 -  \thJ_e^\ell \cdot e_3, \tJ_m^\ell \cdot e_3 -  \thJ_m^\ell \cdot e_3)\|_{H^{1/2}(Q_0)} \le C \|(J_e^\ell \cdot \nu - \hJ_e^\ell \cdot \nu, J_m^\ell \cdot \nu - \hJ_m^\ell \cdot \nu \|_{H^{1/2}(\partial \Omega \cap U_\ell)}. 
$$
We deduce from \eqref{pro-main-e1} that 
\begin{multline}\label{pro-main-e2}
C \Big( \| ( E^\ell, H^\ell, \hE^\ell, \hH^\ell) \|_{H^1(\Omega \cap U_\ell)}  + |k| \,  \| ( E^\ell, H^\ell, \hE^\ell, \hH^\ell) \|_{L^2(\Omega \cap U_\ell)} \Big) \\[6pt]  \le  \| (J_e^\ell, J_m^\ell, \hJ_e^\ell, \hJ_m^\ell)\|_{L^2(\Omega \cap U_\ell)} + \frac{1}{|k|}  \| (\dive  J_e^\ell, \dive  J_m^\ell, \dive  \hJ_e^\ell, \dive  \hJ_m^\ell)\|_{L^2(\Omega \cap U_\ell)} \\[6pt]
+ \frac{1}{|k|}  \| (J_e^\ell \cdot \nu -  \hJ_e^\ell  \cdot \nu , J_m \cdot \nu -  \hJ_m \cdot \nu)\|_{H^{1/2}(\partial \Omega \cap U_\ell)}. 
\end{multline}
Take the sum with respect to $\ell$.  We then  have, for some $\tau_0 <   s_0/4$,  
\begin{multline}\label{pro-main-p1}
C \Big( \| ( E, H, \hE, \hH) \|_{H^1(\Omega_{\tau_0})}  + |k| \, \| (E, H, \hE, \hH) \|_{L^2(\Omega_{\tau_0})}  \Big) \\[6pt]  \le  \| (J_e, J_m, \hJ_e, \hJ_m)\|_{L^2(\Omega)} + \frac{1}{|k|}  \| (\dive  J_e, \dive  J_m, \dive  \hJ_e, \dive  \hJ_m)\|_{L^2(\Omega)} \\[6pt]+ 
\frac{1}{|k|}  \| (J_e \cdot \nu -  \hJ_e \cdot \nu, J_m \cdot \nu -  \hJ_m \cdot \nu)\|_{H^{1/2}(\partial \Omega)}  \\[6pt]
+  \| (E, H, \hE, \hH) \|_{L^2(\Omega_{s_0/2})} + \frac{1}{|k|}  \| ( E,  H,  \hE,  \hH) \|_{H^1(\Omega_{s_0/2})}. 
\end{multline}

Applying  \Cref{lem-decay} below, we have 
\begin{equation}\label{pro-main-p2}
\| (E, H, \hE, \hH) \|_{L^2(\Omega \setminus \Omega_{\tau_0})} \le c_1 e^{- c_2|k|} \| (E, H, \hE, \hH) \|_{L^2(\Omega_{\tau_0})}, 
\end{equation}
for some positive constants $c_1, c_2$ depending only on $\Lambda$, $\gamma$, $\tau_0$,  and $\Omega$. 
Since $(\eps, \mu, \heps, \hmu) \in C^1(\Omega_{3\tau_0} \setminus \Omega_{\tau_0/2})$, it follows from \eqref{sys-ITE-1} that 
\begin{multline}\label{pro-main-p3}
\| (E, H,  \hE, \hH) \|_{H^1(\Omega_{s_0/2} \setminus \Omega_{\tau_0})} \le C |k|  \,  \| (E, H, \hE, \hH) \|_{L^2(\Omega_{s_0} \setminus \Omega_{\tau_0/2})} \\[6pt]
+ C\| (J_e, J_m, \hJ_e, \hJ_m)\|_{L^2(\Omega)} + \frac{C}{|k|}  \| (\dive  J_e, \dive  J_m, \dive  \hJ_e, \dive  \hJ_m)\|_{L^2(\Omega)}. 
\end{multline}
Taking $k_0$ sufficiently large and $|k| \ge k_0$, from \eqref{pro-main-p2} and \eqref{pro-main-p3}, one can absorb the last two terms of the RHS of \eqref{pro-main-p1} by the LHS of \eqref{pro-main-p1}. We then have 
\begin{multline}\label{pro-main-p4}
C \Big( \| (E, H, \hE, \hH) \|_{H^1(\Omega_{\tau_0})}  + |k| \, \| (E, H, \hE, \hH) \|_{L^2(\Omega_{\tau_0})} \Big) \\[6pt] 
 \le  \| (J_e, J_m, \hJ_e, \hJ_m)\|_{L^2(\Omega)} + \frac{1}{|k|}  \| (\dive  J_e, \dive  J_m, \dive  \hJ_e, \dive  \hJ_m)\|_{L^2(\Omega)} \\[6pt]
 +\frac{1}{|k|}  \| (J_e \cdot \nu -  \hJ_e \cdot \nu, J_m \cdot \nu -  \hJ_m \cdot \nu)\|_{H^{1/2}(\partial \Omega)}.     
\end{multline}
Using \eqref{pro-main-p2} and \eqref{pro-main-p4}, we derive from \eqref{pro-main-p3} that 
\begin{multline}\label{pro-main-p5}
C \| (E, H,  \hE, \hH) \|_{H^1(\Omega_{s_0/2} \setminus \Omega_{\tau_0})}  
 \le  \| (J_e, J_m, \hJ_e, \hJ_m)\|_{L^2(\Omega)} \\[6pt]
 + \frac{1}{|k|}  \| (\dive  J_e, \dive  J_m, \dive  \hJ_e, \dive  \hJ_m)\|_{L^2(\Omega)} 
 +\frac{1}{|k|}  \| (J_e \cdot \nu -  \hJ_e \cdot \nu, J_m \cdot \nu -  \hJ_m \cdot \nu)\|_{H^{1/2}(\partial \Omega)}.     
\end{multline}
The conclusion now follows from \eqref{pro-main-p4} and \eqref{pro-main-p5}. The proof  is complete. 
\end{proof}

In the proof of \Cref{pro-main-1}, we used the following decay result on the Maxwell equations: 

\begin{lemma}\label{lem-decay}  Let $\gamma > 0$, $k  \in \mC$ with $|\Im(k^2)| \ge \gamma |k|^2$ and $|k| \ge 1$, and let $\eps, \mu \in [L^\infty(\Omega)]^{3\times 3}$ be symmetric and uniformly elliptic, i.e. 
$$
\Lambda^{-1} \le \eps, \, \mu, \, \heps, \hmu \le \Lambda, 
$$
for some $\Lambda \ge 1$.  Given $J_e, J_m \in L^2(\Omega)$,  let  $(E, H) \in [L^2(\Omega)]^{6}$ be a solution  of 
\begin{equation*}
\left\{\begin{array}{c}
\nabla \times E= k \mu H + J_e \mbox{ in } \Omega, \\[6pt]
\nabla \times H = - k \eps E + J_m \mbox{ in } \Omega. 
\end{array}\right. 
\end{equation*}
For all $s>0$, there exists two positive constants $c_1$ and $c_2$ depending only on $\Lambda$, $\gamma$, $s$, and $\Omega$ such that 
\begin{equation*}
\| (E, H)\|_{L^2(\Omega \setminus \Omega_s)} \le c_1\exp (-c_2 |k|  ) \| (E, H)\|_{L^2(\Omega_s)} + c_1 \| (J_e, J_m)\|_{L^2(\Omega)}. 
\end{equation*}
\end{lemma}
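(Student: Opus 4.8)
The plan is to prove an Agmon-type weighted energy estimate: the hypothesis $|\Im(k^2)|\ge\gamma|k|^2$ (together with $|k^2|=|k|^2$) confines $k^2$ to a cone uniformly away from $[0,\infty)$, so any $L^2$ solution of the Maxwell system is evanescent and decays at a rate proportional to $|k|$ away from the places where data is felt — here the currents $(J_e,J_m)$ and the collar $\Omega_s$ (the collar appears because no boundary condition is imposed, so the part of $(E,H)$ supported near $\partial\Omega$ must be treated as given). First I would record a second-order identity obtained directly from the first-order system: since $\mu^{-1}\nabla\times E-\mu^{-1}J_e=kH$ and $\nabla\times H=-k\eps E+J_m$ in $\Omega$, and $E,H\in H(\curl,\Omega)$ (their curls lie in $L^2$), Green's formula gives, for every $\phi\in H_0(\curl,\Omega)$,
\begin{equation*}
\int_\Omega(\mu^{-1}\nabla\times E)\cdot\overline{\nabla\times\phi}\,dx+k^2\int_\Omega(\eps E)\cdot\bar\phi\,dx=k\int_\Omega J_m\cdot\bar\phi\,dx+\int_\Omega(\mu^{-1}J_e)\cdot\overline{\nabla\times\phi}\,dx.
\end{equation*}

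Next I would fix the Carleman data. Take a smooth cutoff $\chi$ with $\chi\equiv0$ on $\Omega_{s/4}$, $\chi\equiv1$ on $\Omega\setminus\Omega_{s/2}$, $|\nabla\chi|\le C_s$, and a Lipschitz weight $\rho$ with $|\nabla\rho|\le1$, $\rho\equiv0$ on $\Omega_{s/2}$ and $\rho\equiv s/2$ on $\Omega\setminus\Omega_s$ (e.g. $\rho=\min(d(\cdot),s)-s/2$ truncated below at $0$ and mollified, $d(x)=\mathrm{dist}(x,\partial\Omega)$). Then I would test the identity with $\phi=wE$, $w:=\chi^2e^{2\tau\rho}$, $\tau:=\eta|k|$, with a small $\eta>0$ to be chosen; $wE\in H_0(\curl,\Omega)$ because it is compactly supported in $\Omega$. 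The structural point is that, $\mu$ and $\eps$ being real symmetric and uniformly elliptic, $(\mu^{-1}\nabla\times E)\cdot\overline{\nabla\times E}$ and $(\eps E)\cdot\bar E$ are real with lower bounds $\Lambda^{-1}|\nabla\times E|^2$ and $\Lambda^{-1}|E|^2$, while the only non-real term produced on the left, coming from $\nabla\times(wE)=w\,\nabla\times E+\nabla w\times E$, is the commutator $\int_\Omega(\mu^{-1}\nabla\times E)\cdot(\nabla w\times\bar E)$.

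Then I would split into real and imaginary parts. The imaginary part, using $|\Im(k^2)|\ge\gamma|k|^2$, controls $\Lambda^{-1}\gamma|k|^2\int_\Omega w|E|^2$ by the commutator and the current terms; the real part controls $\Lambda^{-1}\int_\Omega w|\nabla\times E|^2$ by $|k|^2\int_\Omega w|E|^2$, the commutator, and the current terms. In the commutator $\nabla w=2\chi\nabla\chi\,e^{2\tau\rho}+2\tau\chi^2e^{2\tau\rho}\nabla\rho$, the $\nabla\chi$-part is supported in $\Omega_{s/2}\setminus\Omega_{s/4}\subset\Omega_s$, where $\rho=0$ and $|\nabla\times E|\le\Lambda|k|\,|H|+|J_e|$, so it contributes only the unweighted quantities $\lesssim_s|k|\,\|(E,H)\|_{L^2(\Omega_s)}^2+\|(J_e,J_m)\|_{L^2(\Omega)}^2$ allowed on the right of the claim; the $\tau$-part is $\le2\tau\Lambda\int_\Omega w|\nabla\times E|\,|E|$, which by Young's inequality splits into $\delta\int_\Omega w|\nabla\times E|^2$ (replaced, via the real-part bound, by $\lesssim\delta|k|^2\int_\Omega w|E|^2$ up to harmless remainders) plus $\delta^{-1}\eta^2|k|^2\int_\Omega w|E|^2$; the current terms are treated likewise, using $w\le e^{\tau s}$ to extract a factor $e^{\tau s}\|(J_e,J_m)\|_{L^2(\Omega)}^2$. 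Choosing the Young parameters small and then $\eta$ small, depending only on $\gamma$ and $\Lambda$, makes the net coefficient of $|k|^2\int_\Omega w|E|^2$ on the right at most $\tfrac12\Lambda^{-1}\gamma$, which is absorbed to yield
\begin{equation*}
|k|^2\int_\Omega w\,|E|^2\,dx\ \le\ C\,|k|\,\|(E,H)\|_{L^2(\Omega_s)}^2+C\,e^{\tau s}\,\|(J_e,J_m)\|_{L^2(\Omega)}^2 .
\end{equation*}

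Since $w\le e^{\tau s}$ everywhere and $w\equiv e^{\tau s}$ on $\Omega\setminus\Omega_s$, dividing by $e^{\tau s}$, using $|k|\ge1$ and $\tau=\eta|k|$, and taking square roots gives $\|E\|_{L^2(\Omega\setminus\Omega_s)}\le Ce^{-\eta s|k|/2}\|(E,H)\|_{L^2(\Omega_s)}+C\|(J_e,J_m)\|_{L^2(\Omega)}$; the real-part bound analogously gives the same decay (up to an innocuous power of $|k|$) for $\nabla\times E$ on $\Omega\setminus\Omega_s$, and then $H=\tfrac1k\mu^{-1}(\nabla\times E-J_e)$ delivers the bound for $H$, the stray power of $|k|$ being killed by $1/k$ at the cost of enlarging the constant $c_2$ slightly. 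Setting $c_2=\eta s/2$ and taking $c_1$ to be the resulting constant finishes the proof. The delicate point — the main obstacle — is the bookkeeping of the $\tau=\eta|k|$ error terms: one must check that after using the real part to trade every $\int_\Omega w|\nabla\times E|^2$ for $|k|^2\int_\Omega w|E|^2$ (plus harmless remainders), the total coefficient then multiplying $|k|^2\int_\Omega w|E|^2$ on the right is $O(\sqrt\eta)$, not $O(1)$, so that one single small choice of $\eta$, depending only on $\gamma$ and $\Lambda$, suffices to absorb it.
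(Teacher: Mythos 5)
Your argument is correct and rests on the same mechanism as the paper's proof --- an Agmon-type estimate with the exponential weight $e^{\eta|k|\varphi}$, $\varphi$ built from $\mathrm{dist}(\cdot,\partial\Omega)$, in which the $O(\eta|k|)$ commutator produced by the weight is absorbed into the $O(|k|^2)$ positivity coming from $|\Im(k^2)|\ge\gamma|k|^2$ --- but the implementation differs at three points. First, you pass to the second-order equation for $E$, test with $wE$, and recover $H$ from $H=k^{-1}\mu^{-1}(\nabla\times E-J_e)$ at the end; the paper stays with the first-order system, conjugates both fields by the weight, and pairs the first equation with $\bar H^1$, which produces the symmetric quantity $k^2\int\langle\mu H^1,H^1\rangle+|k|^2\int\langle\eps E^1,E^1\rangle$ and treats $E$ and $H$ on an equal footing, so no separate step for $H$ is needed. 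Second, you compensate for the absence of boundary conditions with a cutoff $\chi$ vanishing on $\Omega_{s/4}$, so that the test function is admissible and the $\nabla\chi$ terms land in the collar; the paper instead integrates by parts over $\Omega\setminus\Omega_\tau$ and selects a good level $\tau\in(s/4,s/2)$ by averaging, so that the resulting boundary integral is bounded by $Cs^{-1}\|(E,H)\|_{L^2(\Omega_s)}^2$. Your variant avoids choosing a level set but forces the real-part/imaginary-part bootstrap you rightly single out as the delicate point --- and which does close: substituting the real-part bound for $\int w|\nabla\times E|^2$ into the commutator estimate leaves a coefficient $O(\delta+\eta^2/\delta)$ in front of $|k|^2\int w|E|^2$, which is made small by choosing the Young parameter $\delta$ and then $\eta$, both depending only on $\gamma$ and $\Lambda$. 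Third, you carry $(J_e,J_m)$ through the weighted estimate, the factor $e^{\tau s}$ they acquire being cancelled exactly upon dividing by $w\equiv e^{\tau s}$ on $\Omega\setminus\Omega_s$; the paper removes them at the outset by subtracting the solution of the auxiliary boundary-value problem with $E^1\times\nu=0$ on $\partial\Omega$, whose $L^2$ norm is $\le C\|(J_e,J_m)\|_{L^2(\Omega)}$ by the Lax--Milgram argument of \Cref{lem-HS}. Both routes yield constants independent of $k$, which is the point of the lemma.
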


\begin{proof} Let $(E^1, H^1) \in [L^2(\Omega)]^6$ be the unique solution of 
\begin{equation}\label{lem-HS-sys}
\left\{\begin{array}{c}
\nabla \times E^1= k \mu H^1 + J_e \mbox{ in } \Omega, \\[6pt]
\nabla \times H^1 = - k \eps E^1 + J_m \mbox{ in } \Omega, \\[6pt]
E^1 \times e_3 = 0  \mbox{ on } \partial \Omega,  
\end{array}\right. 
\end{equation}
As in the proof of  \Cref{lem-HS}, we have 
$$
\| (E^1, H^1) \|_{L^2(\Omega)} \le C \| (J_e, J_m)\|_{L^2(\Omega)}. 
$$
Considering $(E- E^1, H-H^1)$, w.l.o.g., one might assume that $J_e = J_m  = 0$ in $\Omega$. This is assumed from later on.  

Fix $\varphi \in C^2(\Omega)$  such that $\varphi = c s$ in $\Omega \setminus \Omega_s$ and $\varphi = 0$ in $\Omega_{s/2}$, and $|\nabla \varphi| \le c$  in $\Omega$ where  $c$ is a small positive constant defined later (the smallness of $c$ depends only on $\Lambda$ and $\Omega$, it is independent of $s$). 
Set $\phi(x) =  e^{ |k| \varphi(x)}$ and   $E^1 (x)= \phi(x)E(x)$ and $H^1(x) = \phi (x) H(x)$ for $x \in \Omega$. 
We have 
\begin{equation*}
\left\{\begin{array}{c}
\nabla \times E^1= k \mu H^1 + J_e^1 \mbox{ in } \Omega, \\[6pt]
\nabla \times H^1 = - k \eps E^1 + J_m^1  \mbox{ in } \Omega, 
\end{array}\right. 
\end{equation*}
where 
$$
J_e^1 =  \nabla \phi \times E \quad \mbox{ and } \quad J_m^1 = \nabla \phi \times H \mbox{ in } \Omega. 
$$
Multiplying the first equation with $\bar H^1$, integrating by parts in $\Omega \setminus \Omega_\tau$ for $s/4 < \tau < s/2$, and using the second equation,  we have 
\begin{multline*}
\left| \int_{\Omega \setminus \Omega_\tau} k \langle \mu H^1, H^1 \rangle   +  \bar k   \int_{\Omega \setminus \Omega_\tau} \langle \eps E^1, E^1 \rangle  \right|\\[6pt]
\le  \int_{\Omega \setminus \Omega_\tau} |J_e^1| |H^1| + |J_m^1| |E^1|   + C \int_{\partial (\Omega \setminus \Omega_\tau)} (|E^1|^2 + |H^1|^2). 
\end{multline*}
This yields 
\begin{multline}\label{lem-decay-bilinear2}
\left| \int_{\Omega \setminus \Omega_\tau} k^2 \langle \mu H^1, H^1 \rangle   +   |k|^2   \int_{\Omega \setminus \Omega_\tau} \langle \eps E^1, E^1 \rangle  \right|\\[6pt]
\le |k| \int_{\Omega \setminus \Omega_\tau} |J_e^1| |H^1| + |J_m^1| |E^1|   + C |k| \int_{\partial (\Omega \setminus \Omega_\tau)} (|E^1|^2 + |H^1|^2). 
\end{multline}
By the definition of $J_e^1, \, J_m^1$, and of $E^1$ and $H^1$,  
$$
|J_e^1| \le c |k| |E^1|, \quad  |J_m^1| \le c |k| |H^1|  \mbox{ in } \Omega, \quad \mbox{ and } \quad E^1 - E = H^1 - H = 0 \mbox{ in } \Omega_{s/2}, 
$$
we derive from \eqref{lem-decay-bilinear2} that, for $c$ sufficiently small, 
\begin{equation*}
\left| \int_{\Omega \setminus \Omega_\tau} k^2 \langle \mu H^1, H^1 \rangle   +  |k|^2  \int_{\Omega \setminus \Omega_\tau} \langle \eps E^1, E^1 \rangle  \right| \le     C |k| \int_{\partial (\Omega \setminus \Omega_\tau)} (|E|^2 + |H|^2). 
\end{equation*}
The conclusion follows by taking $\tau$ such that 
\begin{equation*}
\int_{\partial (\Omega \setminus \Omega_\tau)} (|E|^2 + |H|^2) \le C s^{-1} \int_{\Omega_s} (|E|^2 + |H|^2). 
\end{equation*}
This yields 
\begin{equation*}
\int_{\partial (\Omega \setminus \Omega_\tau)} (|E^1|^2 + |H^1|^2) \le  C s^{-1} \int_{\Omega_s} (|E|^2 + |H|^2), 
\end{equation*}
and the conclusion follows by the definition of $E^1$ and $H^1$. The proof is complete. 
\end{proof}

\begin{remark} \rm The proof of \Cref{lem-decay} is quite standard, see e.g. \cite[Theorem 2.2]{HJNg1} for a variant dealing with  the Helmholtz equation. 
\end{remark}

We now apply a limiting absorption principle. To this end, let us set 
$$
\bH_1 (\Omega) = \Big\{ (u, v) \in [H(\curl, \Omega)]^2; (u -v) \times \nu = 0 \mbox{ on } \partial \Omega \Big\}. 
$$
One can check that $\bH_1(\Omega)$ is a Hilbert space equipped with the natural scalar product induced from the one of $[H(\curl, \Omega)]^2$.

Consider $(J_e, J_m, \hJ_e, \hJ_m) \in [L^2(\Omega)]^{12}$.  Let $\gamma > 0$, $k \in \mC$ with $|\Im(k^2)| \ge \gamma |k|^2$ and $|k| \ge 1$. Set $a = \mbox{ sign }  \Im(k^2)$.  For $\delta > 0$ sufficiently small with respect to $\gamma$, we claim that there exists a unique solution  $(E^\delta, H^\delta, \hE^\delta, \hH^\delta) \in [L^2(\Omega)]^{12}$ of the system 
\begin{equation}\label{sys-ITE-d}
\left\{\begin{array}{c}
\nabla \times E^\delta = (1 - i a \delta ) k \mu H ^\delta+ J_e \mbox{ in } \Omega, \\[6pt]
\nabla \times H^\delta = - (1 - i a \delta ) k \eps E^\delta + J_m \mbox{ in } \Omega, 
\end{array}\right. \quad \left\{\begin{array}{c}
\nabla \times \hE = - (1 - i a\delta) k \hmu \hH^\delta + \hJ_e \mbox{ in } \Omega, \\[6pt]
\nabla \times \hH^\delta =   (1 - i a \delta)  k \heps \hE^\delta + \hJ_m \mbox{ in } \Omega,  
\end{array}\right.
\end{equation}
\begin{equation}\label{bdry-ITE-d}
(\hE^\delta - E^\delta) \times \nu = 0 \mbox{ on } \partial \Omega, \quad \mbox{ and } \quad (\hH^\delta - H^\delta) \times \nu = 0 \mbox{ on } \partial \Omega.  
\end{equation}
Indeed, consider the following equation
\begin{multline}\label{Step2-bilinear}
\int_{\Omega} \langle (1 - i a \delta)^{-1}\mu^{-1} \nabla \times E^\delta, \nabla \times \varphi \rangle +  k^2 (1 - i a \delta) \langle \eps E^\delta, \varphi \rangle \\[6pt] +  \int_{\Omega}  \langle  (1 -  i a \delta)^{-1}\hmu^{-1} \nabla \times \hE^\delta, \nabla \times \hvarphi \rangle +    k^2 (1 -  i a \delta) \langle \heps \hE^\delta, \hvarphi \rangle \\[6pt] =  \int_{\Omega} \langle (1 - i a\delta)^{-1} \mu^{-1} J_e, \nabla \times \varphi \rangle  + k \langle J_m, \varphi \rangle + \int_{\Omega} \langle (1 - i a\delta)^{-1} \hmu^{-1} \hJ_e, \hvarphi \rangle  -  k \langle \hJ_m, \hvarphi \rangle,  
\end{multline}
for all $(\varphi, \hvarphi) \in \bH_1(\Omega)$. 
Note that, the absolute value of the imaginary part of the LHS of \eqref{Step2-bilinear} with $(\varphi, \hvarphi) = (E^\delta, \hE^\delta) $ is greater than $C \delta \int_{\Omega} |(E^\delta, \nabla \times E^\delta, \hE^\delta, \nabla \times \hE^\delta)|^2$ for $\delta$ sufficiently small.  By Lax-Milgram's theory, there exists a unique solution $(E^\delta, \hE^\delta) \in \bH_1(\Omega)$ for \eqref{Step2-bilinear}. Set, in $\Omega$,  
$$
H^\delta = k^{-1}(1 -  i a \delta)^{-1}  \mu^{-1} \Big( \nabla \times E^\delta - J_e\Big) \quad \mbox{ and } \quad   \hH^\delta = - k^{-1}(1 - i a \delta)  \hmu^{-1} \Big( \nabla \times \hE^\delta - \hJ_e\Big). 
$$
Considering $\varphi, \hvarphi \in C^1_{c}(\Omega)$ in \eqref{Step2-bilinear}, we obtain 
$$
\nabla \times H^\delta = - (1 + i \delta) k \eps E^\delta + J_m \mbox{ in } \Omega \quad \mbox{ and } \quad \nabla \times \hH^\delta = - (1 - i \delta) ik \heps \hE^\delta + \hJ_m \mbox{ in } \Omega. 
$$
This in turn implies that 
$$
(\hH^\delta - H^\delta ) \times \nu = 0 \mbox{ on } \partial \Omega. 
$$
Therefore, the existence and the uniqueness of $(E_\delta, H_\delta)$ are established.  

\medskip 

Next denote
\begin{multline}\label{def-Hspace}
\bH (\Omega) = \Big\{ (u, v, \hu, \hv) \in [L^2(\Omega)]^{12}; \dive (\eps u) = \dive (\mu v) = \dive (\heps \hu) = \dive(\hmu \hv) = 0 \mbox{ in } \Omega,  \\[6pt] \mbox{ and }   
\eps u \cdot \nu  - \heps  \hu \cdot \nu =  \mu v \cdot \nu -   \hmu \hv \cdot \nu  = 0 \mbox{ on } \partial \Omega \Big\}, 
\end{multline}
and let 
$$
\| (u, v, \hu, \hv) \|_{\bH} = \| (u, v, \hu, \hv) \|_{L^2(\Omega)}. 
$$
One can check that $\bH(\Omega)$ is a Hilbert space with the corresponding scalar product. 

\medskip 
We finally  have 

\begin{proposition}\label{pro-main} Let $\gamma > 0$, $\eps, \, \mu, \, \heps, \, \hmu \in [L^\infty(\Omega)]^{3 \times 3}$ be symmetric. Assume that there exist $\Lambda  \ge 1$, $\Lambda_1 > 0$, and $s_0 > 0$   such that 
\begin{equation*}
\Lambda^{-1} \le \eps, \, \mu, \, \heps, \,  \hmu \le \Lambda \mbox{ in } \Omega,  \quad \| (\eps, \mu, \heps, \hmu)  \|_{C^1(\Omega_{s_0})} \le \Lambda, 
\end{equation*}
$$
\eps,\,  \mu,\,  \heps, \,  \hmu \mbox{ are isotropic on } \partial \Omega, 
$$
and, for $x \in \partial \Omega$,  
$$
|\eps (x) -  \heps(x)| \ge \Lambda_1, \quad |\mu(x) - \hmu(x)| \ge \Lambda_1, \quad \mbox{ and } \quad |\eps(x)/ \mu (x) - \heps(x)/ \hmu(x)| \ge \Lambda_1. 
$$
There exist two positive constants $k_0 \ge 1$ and $C > 0$ depending only on $\Lambda$, $\Lambda_1$, $\gamma$, $s_0$, and $\Omega$ such that for $k \in \mC$ with $|\Im{(k^2)}| \ge \gamma |k|^2$ and $|k| \ge k_0$, and for every $(J_e^1, J_m^1, \hJ_e^1, \hJ_m^1) \in \bH(\Omega)$, there exists a unique solution  $(E, H, \hE, \hH) \in \bH(\Omega)$ of 
\begin{equation}\label{sys-ITE-m}
\left\{\begin{array}{c}
\nabla \times E = k \mu H + J_e \mbox{ in } \Omega, \\[6pt]
\nabla \times H = - k \eps E + J_m \mbox{ in } \Omega, 
\end{array}\right. \quad \left\{\begin{array}{c}
\nabla \times \hE = k \hmu \hH + \hJ_e \mbox{ in } \Omega, \\[6pt]
\nabla \times \hH = - k \heps \hE + \hJ_m \mbox{ in } \Omega,  
\end{array}\right.
\end{equation}
\begin{equation}\label{bdry-ITE-m}
(\hE - E) \times \nu = 0 \mbox{ on } \partial \Omega, \quad \mbox{ and } \quad (\hH - H) \times \nu = 0 \mbox{ on } \partial \Omega, 
\end{equation}
where $(J_e, J_m, \hJ_e, \hJ_m) = (\mu J_m^1, \eps J_e^1, \hmu \hJ_m^1, \heps \hJ_e^1)$.  Moreover, 
\begin{equation}\label{pro-main-est}
 |k| \,  \| (E, H, \hE, \hH) \|_{L^2(\Omega)}  + \| (E, H, \hE, \hH) \|_{H^1(\Omega_{s_0/2})} 
 \le C \| (J_e^1, J_m^1, \hJ_e^1, \hJ_m^1)\|_{L^2(\Omega)}. 
\end{equation}
\end{proposition}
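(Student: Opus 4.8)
\emph{Approach.} The plan is to obtain existence from the limiting absorption principle already set up above and to read off uniqueness and the estimate \eqref{pro-main-est} directly from \Cref{pro-main-1}. First I would record two structural facts. Since $(J_e^1, J_m^1, \hJ_e^1, \hJ_m^1) \in \bH(\Omega)$ and $(J_e, J_m, \hJ_e, \hJ_m) = (\mu J_m^1, \eps J_e^1, \hmu \hJ_m^1, \heps \hJ_e^1)$, the four components of $(J_e, J_m, \hJ_e, \hJ_m)$ are individually divergence free in $\Omega$ and satisfy $J_e \cdot \nu = \hJ_e \cdot \nu$ and $J_m \cdot \nu = \hJ_m \cdot \nu$ on $\partial \Omega$; hence the right-hand side of \eqref{pro-main-1-est} collapses to $C\|(J_e, J_m, \hJ_e, \hJ_m)\|_{L^2(\Omega)} \le C\|(J_e^1, J_m^1, \hJ_e^1, \hJ_m^1)\|_{L^2(\Omega)}$. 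Applying \Cref{pro-main-1} with $\alpha = 1$ then gives, for $|k| \ge k_0$, both the uniqueness of a solution of \eqref{sys-ITE-m}--\eqref{bdry-ITE-m} in $[L^2(\Omega)]^{12}$ (a fortiori in $\bH(\Omega)$) and the bound \eqref{pro-main-est} for any solution that exists.

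\emph{Uniform bounds for the regularized problems.} For $\delta > 0$ small relative to $\gamma$, the regularized system \eqref{sys-ITE-d}--\eqref{bdry-ITE-d} has a unique solution $(E^\delta, H^\delta, \hE^\delta, \hH^\delta) \in \bH_1(\Omega)$ by the Lax--Milgram argument given above; taking divergences in the curl equations (using $\dive J_e = \dive J_m = \dive \hJ_e = \dive \hJ_m = 0$) and computing the $\nu$-components of the curl equations on $\partial \Omega$ (using the matched tangential traces together with $J_e\cdot\nu = \hJ_e\cdot\nu$, $J_m\cdot\nu = \hJ_m\cdot\nu$) shows in addition that $(E^\delta, H^\delta, \hE^\delta, \hH^\delta) \in \bH(\Omega)$. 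Writing $(1 - i a \delta)^2 k^2 = (1-\delta^2)k^2 - 2 i a \delta k^2$ with $a = \operatorname{sign}\Im(k^2)$ and using $|\Im(k^2)| \ge \gamma |k|^2$, $|\Re(k^2)| \le |k|^2$, one gets $\big|\Im\big((1-i a\delta)^2 k^2\big)\big| \ge \tfrac{\gamma}{2}|k|^2$ once $\delta$ is small enough depending only on $\gamma$, while $|(1-i a\delta)k| \ge |k| \ge k_0$. Hence \Cref{pro-main-1} (with $\gamma$ replaced by a constant comparable to $\gamma$, $k$ by $(1-i a\delta)k$, and $\alpha = 1$) applies to \eqref{sys-ITE-d}--\eqref{bdry-ITE-d}, and by the first paragraph yields
\[
|k|\,\|(E^\delta, H^\delta, \hE^\delta, \hH^\delta)\|_{L^2(\Omega)} + \|(E^\delta, H^\delta, \hE^\delta, \hH^\delta)\|_{H^1(\Omega_{s_0/2})} \le C\,\|(J_e^1, J_m^1, \hJ_e^1, \hJ_m^1)\|_{L^2(\Omega)},
\]
with $C$ independent of $\delta$.

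\emph{Passing to the limit $\delta \to 0$.} The family $(E^\delta, H^\delta, \hE^\delta, \hH^\delta)$ is bounded in $[L^2(\Omega)]^{12}$, hence also in $[H(\curl,\Omega)]^4$ by the curl equations in \eqref{sys-ITE-d}, and in $[H^1(\Omega_{s_0/2})]^{12}$; along a subsequence it converges weakly in all of these spaces to some $(E, H, \hE, \hH)$. Letting $\delta \to 0$ in \eqref{sys-ITE-d} (the modified coefficients converge to those of \eqref{sys-ITE-m}) gives \eqref{sys-ITE-m}. Since $\bH_1(\Omega)$ is a closed, hence weakly closed, subspace of $[H(\curl,\Omega)]^2$ and both $(E^\delta, \hE^\delta)$ and $(H^\delta, \hH^\delta)$ lie in it, the limits $(E, \hE)$ and $(H, \hH)$ satisfy \eqref{bdry-ITE-m}. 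Differentiating \eqref{sys-ITE-m} gives $\dive(\eps E) = \dive(\mu H) = \dive(\heps \hE) = \dive(\hmu \hH) = 0$, and taking $\nu$-components of the curl equations on $\partial \Omega$, using \eqref{bdry-ITE-m} and $J_e\cdot\nu = \hJ_e\cdot\nu$, $J_m\cdot\nu = \hJ_m\cdot\nu$, gives $\eps E\cdot\nu = \heps \hE\cdot\nu$ and $\mu H\cdot\nu = \hmu \hH\cdot\nu$ on $\partial\Omega$; thus $(E, H, \hE, \hH) \in \bH(\Omega)$. Finally, weak lower semicontinuity of the $L^2(\Omega)$ and $H^1(\Omega_{s_0/2})$ norms applied to the uniform bound of the previous paragraph yields \eqref{pro-main-est}, and by uniqueness the whole family converges, not just a subsequence.

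\emph{Main obstacle.} Granted \Cref{pro-main-1}, the argument is mostly bookkeeping; the two points that need care are keeping the spectral hypothesis $\big|\Im\big((1-i a\delta)^2 k^2\big)\big| \gtrsim |k|^2$ uniform in $\delta$ (the elementary inequality above, which is exactly why the regularization parameter must be small relative to $\gamma$), and verifying that the constraints defining $\bH(\Omega)$ — the divergence conditions in $\Omega$ and the matched weighted normal traces on $\partial\Omega$ — survive the weak limit; the tangential-trace conditions are handled by weak closedness of $\bH_1(\Omega)$, while the remaining conditions are recovered by differentiating the limiting system rather than by any trace continuity argument.
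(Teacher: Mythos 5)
There is a genuine gap at the limiting step: the $\delta\to 0$ limit of \eqref{sys-ITE-d} is \emph{not} the system \eqref{sys-ITE-m}. Look at the signs in the hat block of \eqref{sys-ITE-d}: $\nabla\times\hE^\delta = -(1-ia\delta)k\hmu\hH^\delta + \hJ_e$ and $\nabla\times\hH^\delta = +(1-ia\delta)k\heps\hE^\delta + \hJ_m$. As $\delta\to 0$ this converges to the system with $\alpha = -1$ in the notation of \Cref{pro-main-1}, whereas \eqref{sys-ITE-m} is the system with $\alpha = +1$. The sign reversal is not incidental: it is exactly what allows the sesquilinear form \eqref{Step2-bilinear} to be written with a \emph{plus} sign between the two blocks (so that it is coercive after the $\delta$-regularization) while still producing $(\hH^\delta - H^\delta)\times\nu = 0$ as the natural boundary condition on $\bH_1(\Omega)$. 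If you try to regularize \eqref{sys-ITE-m} directly with $\alpha = +1$, cancellation of the boundary terms forces a minus sign between the two blocks and the resulting form is indefinite --- this is precisely the non-coercive structure of the transmission eigenvalue problem, and Lax--Milgram is not available. So your limiting absorption argument proves existence for the reflected system, not for \eqref{sys-ITE-m}. (A smaller instance of the same oversight: you invoke \Cref{pro-main-1} for \eqref{sys-ITE-d} with ``$\alpha = 1$''; the correct value is $\alpha = -1$, which is admissible since $\alpha^2 = 1\in\mR$, so the uniform bound itself does survive.)

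What is missing is the entire second half of the argument. Let $T_1 : \bH(\Omega)\to\bH(\Omega)$ be the solution operator of the $\alpha = -1$ system produced by the limiting absorption step. Writing the hat equations of \eqref{sys-ITE-m} as $\nabla\times\hE = -k\hmu\hH + (\hJ_e + 2k\hmu\hH)$ and $\nabla\times\hH = k\heps\hE + (\hJ_m - 2k\heps\hE)$, the problem \eqref{sys-ITE-m}--\eqref{bdry-ITE-m} becomes the fixed-point equation $(E,H,\hE,\hH) = T_1(J_e^1,J_m^1,\hJ_e^1,\hJ_m^1) + T_1(0,0,-2k\hE,2k\hH)$ in $\bH(\Omega)$. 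Uniqueness for this equation is supplied by \Cref{pro-main-1} with $\alpha = +1$ (your first paragraph, including the reduction of the right-hand side of \eqref{pro-main-1-est} using the constraints defining $\bH(\Omega)$, handles this part correctly), and existence then follows from the Fredholm alternative once one checks that $T_1$ is compact on $\bH(\Omega)$ --- which uses the divergence constraints and matched normal traces built into $\bH(\Omega)$ together with the $H^1(\Omega_{s_0/2})$ control in \eqref{pro-main-est}. Without this compactness-plus-Fredholm step the proof does not close, and your closing remark that the argument is ``mostly bookkeeping'' indicates the sign discrepancy went unnoticed.
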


\begin{proof} 
For $\delta > 0$, let $(E^\delta, H^\delta, \hE^\delta, \hH^\delta) \in [L^2(\Omega)]^{12}$ be the unique solution of the system \eqref{sys-ITE-d} and \eqref{bdry-ITE-d}. Applying \Cref{pro-main-1}, we have 
\begin{equation}
 |k| \,  \| (E^\delta, H^\delta, \hE^\delta, \hH^\delta) \|_{L^2(\Omega)}  + \| (E^\delta, H^\delta, \hE^\delta, \hH^\delta) \|_{H^1(\Omega_{s_0/2})} 
 \le C \| (J_e, J_m, \hJ_e, \hJ_m)\|_{L^2(\Omega)}. 
\end{equation}
Letting $\delta \to 0$, we obtain the existence of a solution of \eqref{sys-ITE-d} and \eqref{bdry-ITE-d} with $\delta = 0$. Moreover, by \Cref{pro-main-1} again, this solution is unique and it holds, for some $s_0 > 0$,  
\begin{equation}\label{thm-main-p1}
\| (E, H, \hE, \hH) \|_{H^1(\Omega_{s_0})} +  |k| \,  \| (E, H, \hE, \hH) \|_{L^2(\Omega)}  
 \le C \| (J_e, J_m, \hJ_e, \hJ_m)\|_{L^2(\Omega)}. 
\end{equation}

Define the operator 
\begin{eqnarray}
\begin{array}{rcccc}
T_1: &  \bH (\Omega) &\to&  \bH (\Omega) \\[6pt]
& (J_e^1, J_m^1, \hJ_e^1, \hJ_m^1)  &  \mapsto & (E, H, \hE, \hH), 
\end{array}
\end{eqnarray}
It is clear that system \eqref{sys-ITE-m} can be rewritten under the form 
\begin{equation}
\left\{\begin{array}{c}
\nabla \times E = k \mu H + J_e \mbox{ in } \Omega, \\[6pt]
\nabla \times H = - k \eps E + J_m \mbox{ in } \Omega, 
\end{array}\right. \quad \left\{\begin{array}{c}
\nabla \times \hE = - k \hmu \hH + \hJ_e + 2k \hmu \hH \mbox{ in } \Omega, \\[6pt]
\nabla \times \hH = k \heps \hE + \hJ_m - 2 k \heps \hE \mbox{ in } \Omega,  
\end{array}\right.
\end{equation}
Thus, for $(E, H, \hE, \hH) \in \bH(\Omega)$, system \eqref{sys-ITE-m} and \eqref{bdry-ITE-m} is equivalent to 
$$
(E, H, \hE, \hH) = T_1 (J_e^1, J_m^1, \hJ_e^1 - 2 k \hE, \hJ_m^1 + 2k \hH) = T_1 (J_e^1, J_m^1, \hJ_e^1, \hJ_m^1 ) + T_1 (0, 0, - 2 k \hE,  2k \hH). 
$$
Since this equation has at most one  solution by \Cref{pro-main-1} and from the fact that $T_1$ is compact, this equation has a unique solution. The proof is completed by applying again \Cref{pro-main-1} to obtain \eqref{pro-main-est}.    
\end{proof}

We are ready to give the proof of our main theorem.

\begin{proof}[Proof of \Cref{thm-main}]
Define the operator 
\begin{eqnarray}
\begin{array}{rcccc}
T: &  \bH (\Omega) &\to&  \bH (\Omega)\\[6pt]
& (J_e^1, J_m^1, \hJ_e^1, \hJ_m^1)  &  \mapsto & (E, H, \hE, \hH), 
\end{array}
\end{eqnarray}
where $(E, H, \hE, \hH) \in \bH(\Omega)$ is the unique solution of \eqref{sys-ITE-m} and \eqref{bdry-ITE-m} for $k\in {\mathbb C}$ satisfying the assumptions in Proposition \ref{pro-main}.  We claim that $T$ is compact. Indeed, this follows from  
$$
\dive(\eps E) = \dive (\mu H) = \dive (\heps \hE) = \dive(\hmu \hH) = 0. 
$$ 
and \eqref{pro-main-est}.   By  the theory of compact operator see,  e.g., \cite{Brezis-FA}, the spectrum of $T$ is discrete. It is clear that  an eigenfunction pair of the ITE problem corresponding to the eigenvalue $\omega$ is an eigenfunction pair of $T$ corresponding to the eigenvalue $k=i \omega $. Hence, the spectrum of the ITE problem is discrete, and the only possible accumulation point of the transmission eigenvalues is $\infty$ since they coincide with the eigenvalues of the inverse of $T$. 
\end{proof}

Finally, we present 

\begin{proof}[Proof of \Cref{pro}] \Cref{pro} is just a consequence of \Cref{pro-main} by noting that the solution given there is 0 if $(J_e, J_m, \hJ_e, \hJ_m) = 0$. 
\end{proof}

\section*{Acknowledgments}
{The research of F. Cakoni is partially supported  by the AFOSR Grant  FA9550-20-1-0024 and  NSF Grant DMS-1813492. H.-M. Nguyen thanks Fondation des Sciences Math\'ematiques de Paris (FSMP) for the Chaire d'excellence which allows him to visit  Laboratoire Jacques Louis Lions  and Mines ParisTech. This work is completed during this visit.}

\providecommand{\bysame}{\leavevmode\hbox to3em{\hrulefill}\thinspace}
\providecommand{\MR}{\relax\ifhmode\unskip\space\fi MR }
\providecommand{\MRhref}[2]{%
  \href{http://www.ams.org/mathscinet-getitem?mr=#1}{#2}
}
\providecommand{\href}[2]{#2}

\end{document}